\newtheorem{theorem}{Theorem}[section]
\newtheorem{remark}[theorem]{Remark}
\def\p{\partial}
\def\na{\nabla}
\def\x{{\mathbf x}}
\def\f{\frac}
\def\q{\quad}
\def\Om{\Omega}
\def\om{\omega}
\def\ep{\epsilon}
\def\mS{{\mathcal S}}
\def\mD{{\mathcal D}}
\def\mK{{\mathcal K}}
\def\mT{{\mathcal T}}
\def\mC{{\mathcal C}}
\def\mE{{\mathcal E}}
\def\mL{{\mathcal L}}
\def\Q{\mathbf Q}
\def\P{\mathbf P}
\def\a{\mathbf a}
\def\z{\mathbf z}
\def\eref#1{(\ref{#1})}
\newcommand{\bm}[1]{\mbox{\boldmath{$#1$}}}
\begin{document}
\title[]{Mathematical framework for multi-frequency identification of thin insulating and small conductive inhomogeneities}
\author{Habib Ammari$^1$, Jin Keun Seo$^2$ and Tingting Zhang$^2$}
\address{$^1$ Department of Mathematics, ETH Z\"urich, R\"amistrasse 101, 8092 Z\"urich, Switzerland.}
\address{$^2$ Department of Computational Science and Engineering, Yonsei University, Seoul 120-749
Korea}
\ead{habib.ammari@math.ethz.ch,\{seoj,zttysu\}@yonsei.ac.kr}
\vspace{10pt}
\begin{abstract}
We are aiming to identify the thin insulating inhomogeneities and small conductive inhomogeneities inside an electrically conducting medium by using multi-frequency electrical impedance tomography (mfEIT). The thin insulating inhomogeneities are considered in the form of tubular neighborhood of a curve and small conductive inhomogeneities are regarded as circular disks. Taking advantage of the frequency dependent behavior of insulating objects, we give a rigorous derivation of the potential along thin insulating objects at various frequencies. Asymptotic formula is given to analyze relationship between inhomogeneities and boundary potential at different frequencies. In numerical simulations, spectroscopic images are provided to visualize the reconstructed admittivity at various frequencies. For the view of both kinds of inhomogeneities, an integrated reconstructed image based on principle component analysis (PCA) is provided. Phantom experiments are performed by using Swisstom EIT-Pioneer Set.
\end{abstract}
\maketitle
\section{Introduction}
Multi-frequency electrical impedance tomography (mfEIT) is a noninvasive method to provide the images of  the conductivity  and permittivity  at various frequencies ranging from 1 kHz to 1 MH inside an electrically conducting object \cite{Griffiths1992,Halter2008,Jain1997,Seo2008a,Wilson2001}.  Recently, multi-frequency imaging techniques have been paid considerable attention due to their advantages in probing thin insulating inhomogeneities with their widths. Their potential applications include biomedical imaging to probe  biological tissues comprising  insulating cell membranes \cite{Fricke1924,Gabriel1996b,Holder2005,Pavlin2003,Kim2012,Seo2012} and non-destructive testing (NDT) to probe insulating  cracks \cite{McCann2001,Pour2011b,Pour2012,Kim2009,Zhang2015}.

In mfEIT, we inject ac currents using surface electrodes to produce the time-harmonic electrical field inside the imaging object.  The induced electrical field is determined by  the distribution of effective conductivity and permittivity, the geometry of the object, electrode positions, the applied frequency, and others \cite{Seo2012}. In the presence of thin insulating inhomogeneities or insulating membranes, the time-harmonic electrical potential near these inhomogeneities varies significantly with the applied frequency.  In particular, the potential jump across the thin insulating inhomogeneities changes a lot with frequency, because currents pass through  a thin insulating inhomogeneity as frequency increases. Those variations with frequency are conveyed to the boundary voltages \cite{Kim2009,Kim2012,Zhang2015}.  Multi-frequency EIT (mfEIT) can take advantage of this frequency-dependent behavior to measure  tissue anomalies and thin insulating inhomogeneities with their thickness.

One challenging issue is how to link the spectroscopic information obtained from  mfEIT to the structural information of an imaging object containing thin insulating inhomogeneities.  We need to describe the role of the thin insulating inhomogeneities by understanding the frequency-dependent interplays between the real and imaginary parts of the complex potentials due to the change of the  refraction along the thin insulating inhomogeneities as frequency varies.

This paper deals with this challenging issue of the spectroscopic mfEIT by means of rigorous mathematical analysis with both numerical and experimental validations. We start with the simplest model of a linear-shaped thin insulating inhomogeneity with a uniform thickness, in order to provide a basis of the rigorous expression on the frequency-dependent behavior of the potential due to the influence of thin insulating inhomogeneities. Through a rigorous asymptotic analysis, we provide the behavior of the time-harmonic potential $u^\om$ across the linear inhomogeneity  with respect to  angular frequency $\om$. To be precise, the frequency-dependent variation of the exterior normal derivative $\f{\p u^\om}{\p\nu}$ on the boundary of the inhomogeneity can be approximately described in terms of the thickness ($2\delta$), the  contrast ratio between the background admittivity $\sigma_b+i\om\ep_b$ and the inhomogeneity permittivity $\ep_c$:
$$
\left|\f{\p u^\om}{\p\nu}\right|~~ \approx ~~ \f{\om}{2\delta }  ~ \f{\left|[u^\omega]\right| }{ |(\sigma_b+i\omega\epsilon_b)/\ep_c|}.
$$

Assuming the above approximation for a more general shape of thin insulators, we could get better understanding on spectroscopic admittivity images being reconstructed from standard mfEIT algorithm. In a special case of inclusions (highly conducting disks and linear insulating segments) inside the imaging object, we derive a formula for detecting both inhomogeneities. Finally, various numerical simulations are performed at multiple frequencies to give spectroscopic reconstructed images. Using spectroscopic images, an integrated image based on principle component analysis (PCA) is presented. In addition, we take use of Swisstom EIT-Pioneer set to carry out phantom experiments at a frequency range from 50kHz to 250kHz.

\section{Mathematical model}
For rigorous analysis, we use the simplified two-dimensional
model by considering axially symmetric cylindrical sections under
the assumption that the out-of-plane current density is negligible
in an imaging slice. We assume a two dimensional electrically
conducting domain $\Om$ with its connected $C^2$-boundary $\partial \Om$.
We denote the conductivity distribution of the domain by $\sigma$
and the permittivity distribution by $\epsilon$. Inside $\Om$,
there exist thin insulating objects $\mC_k,~k=1,2,\ldots, N_C$ and small conductive objects $D_k,~ k=1,2,\ldots, N_D$. Let $D=\cup_{k=1}^{N_D} D_k$ and
$\mC=\cup_{k=1}^{N_C} \mC_k$ denote the collections of the
conductive objects and thin insulating objects, respectively. Since the
conductivity $\sigma$ and permittivity $\ep$ change abruptly
across the thin objects and conductive objects, we denote
\begin{equation}\label{Eq:Admt}
\gamma^\omega(x)=
\left\{\begin{array}{ll}
\gamma^\om_c = \sigma_c+i\omega\epsilon_c &\q \mbox{for}~x\in  \mathcal{C},\\
 \gamma^\om_d  =\sigma_d+i\omega\epsilon_d &\q \mbox{for}~x\in D, \\
  \gamma^\om_b =\sigma_b+i\omega\epsilon_b &\q\mbox{for}~x\in\Omega\backslash(D \cup \mathcal{C}), \\
\end{array}
\right.
\end{equation}
Because the thin objects $\mC_k$ are highly insulating, we consider the following extreme contrast case:
$$
\sigma_c/\sigma_b\approx 0.
$$
In the frequency range below 1MHz
($\f{\om}{2\pi}\le 10^6$), we inject a sinusoidal current
$g(x)\sin (\om t)$ at $x\in\p\Om$ where $g$ is the magnitude of
the current density on $\p\Om$ and $g\in
H^{-1/2}_\diamond(\p\Om)$. Here $H^{-1/2}_\diamond(\p\Om):=\{ \phi\in H^{-1/2}(\p\Om)~:~
\langle \phi, 1 \rangle =0\}$ with $\langle \, , \, \rangle$ being the duality pair between
$H^{-1/2}(\p\Om)$ and $H^{1/2}(\p\Om)$. The injected current produces the
time-harmonic potential $u^\om$ in $\Om$ which is dictated by
\begin{equation}\label{Eq:uw}
\left\{
\begin{array}{ll}
 \nabla\cdot\left(\gamma^\omega\nabla u^\omega\right) =0 &\quad\mbox{in}~\Omega,\\
 \gamma^\omega \f{\partial u^ \omega}{\p \nu} = g &\quad\mbox{on}~\partial\Omega,
 \end{array}\right.
\end{equation}
where $\gamma^{\om} = \sigma + i\omega \ep$ is the admittivity distribution, ${\nu}$ is the
outward unit normal vector on $\p\Om$, and $\f{\p}{\p\nu}$ is the
normal derivative. Setting $\int_{\p\Om} u^\om ds=0$, we can
obtain a unique solution $u^\om$ to (\ref{Eq:uw}) from the
Lax-Milgram theorem. Hence, we can define the Neumann-to-Dirichlet
map $\Lambda_\omega:H^{-1/2}_\diamond(\p\Omega) \to
H^{1/2}_\diamond(\p\Omega)$ by
$\Lambda_\omega(g)=u^\om|_{\p\Omega}$. Using $N_E-$channel
multi-frequency EIT system, we may inject $N_E$ number of linearly
independent currents at several angular frequencies
$\om_1,\ldots,\om_{N_\om}$ and measure the induced corresponding
boundary voltages. We collect these current-voltage data
$\{\Lambda_{\om_j}(g_k)~:~~k=1,\ldots, N_E,~j=1,\ldots,
N_\omega\}$ at various frequencies ranging from 10Hz to 1MHz. The inverse problem is to identify the thin insulators $\mC_k$ and
small conductive objects $D_k$ from measured current-voltage data
in multi-frequency EIT system.

To carry out detailed analysis, we will restrict our considerations to geometric structures of $\mC$ and $D$. We assume that each thin insulating inhomogeneity $\mC_k$ has a uniform thickness of $\delta_k$ and is a neighborhood of a $C^3-$smooth open curve $\mathcal L_k$:
\begin{equation}\label{Eq:curve2}
  \mathcal{C}_k = \{x+h \nu_x~:~ x\in \mathcal L_k, ~-\delta_k<h<\delta_k  \},~~~(k=1,2,\ldots,N_C),
\end{equation}
where $\nu_x$ is the unit normal vector at $x$ on the curve $\mathcal{L}_k$. The thickness to the length ratio is assumed to be very small, that is,  $\delta_k\approx 0$. We also assume that each small conductive object has the form
\begin{equation}\label{Dk}
D_k :=z_k + \delta_D B_k,~~~ (k=1,2,\ldots,N_D),
\end{equation}
where $z_k$ is the center of $D_k$, $B_k$ is a bounded smooth reference domain centered at $(0,0)$ and $\delta_D$ is related to the diameter of $D_k$. We assume that $\mC_k$ and $D_k$ are well separated from each other as well as from the boundary $\p\Omega$, {\it i.e.}, the separation distance is much larger than the characteristic size of the conductors or thickness of the insulators. Note that in the non-resolved case, where the distance between small conductors  or insulators is of order  of the characteristic size of $D_k$ or thickness of $\mC_k$, the conductors and the insulators can not be determined separately. Only equivalent targets can be imaged using boundary measurements (see \cite{ammari_num}). Therefore, throughout this paper, we assume that there exists a positive constant $d_0>0$ \cite{Ammari2006,Ammari2004} such that
 \begin{equation}\label{separation}
    \begin{array}{c}
  \inf_{k\neq k^\prime}\mbox{dist}(D_k,D_{k^\prime})\geq d_0, ~~\inf_{k\neq k^\prime}\mbox{dist}(\mathcal{C}_k,\mathcal{C}_{k^\prime})\geq d_0, \\
  \mbox{dist}(\mC,\p\Omega)\geq d_0, ~~\mbox{dist}(D,\p\Omega)\geq d_0,~~\mbox{dist}_{k,j}(\mC_k,D_j)\geq d_0.
    \end{array}
\end{equation}
\section{Methods}
 In this part, we will focus on rigorous analysis of the frequency-dependent behaviors of the complex potential $u^\omega$ around the thin insulating inhomogeneities. We will derive an explicit formula for detecting positions of conductive inhomogeneities and thin insulating inhomogeneities by using asymptotic expansions of $u^\omega$. The explicit formula depends on the operating frequency $\omega$ and the insulator thickness $\delta_k$.

 To start with, since each thin object $\mathcal C_k$ is highly non-conductive, there is a noticeable potential jump along the thin insulating objects \cite{Ammari2006,Kim2012,Zhang2015}. For ease of notation, we define exterior($+$)/interior($-$) normal derivative on the boundary of $\mathcal C_k$ as follows:
\[
\begin{array}{lll}
\f{\p u^\omega}{\p\nu}(x-\delta_k\nu_x )|_\pm &=& \lim_{s\rightarrow0^+}\f{\p u^\omega}{\p\nu}(x-\delta_k\nu_x \mp
s\nu_x)\\
\f{\p u^\omega}{\p\nu}(x+\delta_k\nu_x)|_\pm&=
&\lim_{s\rightarrow0^+}\f{\p u^\omega}{\p\nu}(x+\delta_k\nu_x \pm s\nu_x )
\end{array} \quad~ \q(x\in \mathcal{L}_k).
\]
Denote by $[u^\omega]_k$ and $\left[\f{\p u^\omega}{\p\nu}\right]_k$ the jump of the potential and the jump of its normal derivative across the boundary of the thin insulating object $\mC_k$, respectively:
\begin{equation}\label{jump-across}
\begin{array}{lll}
[u^\omega]_k(x)&:=&u^\omega (x+ \delta_k\nu_x ) -u^\omega (x-\delta_k\nu_x ),\\
 \left[ \f{\p u^\omega}{\p\nu}\right]_k(x)&:=& \f{\p u^\omega}{\p\nu} (x+\delta_k\nu_x )|_+-
 \f{\p u^\omega}{\p\nu} (x-\delta_k\nu_x )|_+,
 \end{array}
 \quad~ \q(x\in \mathcal{L}_k).
\end{equation}
Then, based on the above notations, we have the following theorem for jump conditions across thin insulating inhomogeneities:
\begin{theorem}\label{Th:JumpCondition}
Let $\mC_k = \{x: |x_1|\leq\ell_k, |x_2|\leq \delta_k\}$ be a tubular neighbourhood of segment $\mL_k=\{x:|x_1|\leq\ell_k, x_2=0\}$ as shown in Figure \ref{Fig:lineCrack}. Let $\mathcal{L}_k^\diamond = \{x\in\mathcal{L}_k: ~dist(x,\p\mathcal{L}_k)>c_0\}$ for $c_0>0$ and $\mC_k^\diamond = \{x+h \nu_x:~x\in \mathcal L_k^\diamond, ~-\delta_k<h<\delta_k \},~(k=1,2,\cdots,N_C)$. For $\delta_k\approx 0$, the jump of potential and the jump of normal derivative across $\mC_k^\diamond$ can be approximated by
\begin{equation}\label{Eq:NuJ}
 \begin{array}{lll}
  [u^\omega]_k(x) &=&  \f{2\delta_k}{\lambda_c(\omega)} \f{\p u^\omega}{\p\nu}(x-\delta_k\nu_x )|_+ + O(\delta_k^2\ln\delta_k)
\\
\left[ \f{\p u^\omega}{\p\nu}\right]_k(x)&=& O(\delta_k\ln\delta_k)
\end{array}
 \quad~ \q(x\in \mathcal{L}_k^\diamond),
\end{equation}
with \begin{equation}\label{Eq:lamC}
\lambda_c(\omega) =  \f{\sigma_c+i\omega\epsilon_c}{\sigma_b+i\omega\epsilon_b}.
\end{equation}
\end{theorem}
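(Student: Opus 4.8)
The plan is to reduce the claim to the two transmission conditions on $\p\mC_k$ and then integrate across the thickness of the slab. Because $\gamma^\om$ is piecewise constant, both $u^\om$ and the normal flux $\gamma^\om\f{\p u^\om}{\p\nu}$ are continuous across each interface $\{x_2=\pm\delta_k\}$, while $u^\om$ is harmonic inside the slab since $\gamma_c^\om$ is constant there. I would work in the Cartesian frame aligned with $\mL_k$, so that $\nu_x=e_2$ and $\f{\p}{\p\nu}=\p_{x_2}$.

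First I would write the potential jump as the integral of the interior normal derivative,
\begin{equation}
[u^\om]_k(x)=\int_{-\delta_k}^{\delta_k}\p_{x_2}u^\om(x_1,t)\,dt,
\end{equation}
and use the interior identity $\p_{x_2}^2u^\om=-\p_{x_1}^2u^\om$ to propagate the normal derivative upward from the lower interface,
\begin{equation}
\p_{x_2}u^\om(x_1,t)=\f{\p u^\om}{\p\nu}(x-\delta_k\nu_x)|_- - \int_{-\delta_k}^{t}\p_{x_1}^2u^\om(x_1,s)\,ds.
\end{equation}
Flux continuity at the lower interface gives $\f{\p u^\om}{\p\nu}(x-\delta_k\nu_x)|_-=\lambda_c(\om)^{-1}\f{\p u^\om}{\p\nu}(x-\delta_k\nu_x)|_+$, so integrating in $t$ produces the leading term $\f{2\delta_k}{\lambda_c(\om)}\f{\p u^\om}{\p\nu}(x-\delta_k\nu_x)|_+$, while the double integral of $\p_{x_1}^2u^\om$ furnishes the remainder. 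The normal-derivative jump is handled analogously: integrating the interior equation across the slab expresses $\left[\f{\p u^\om}{\p\nu}\right]_k$ through $\int_{-\delta_k}^{\delta_k}\p_{x_1}^2u^\om\,dt$ together with the flux matching at the two interfaces, which yields the stated $O(\delta_k\ln\delta_k)$ bound.

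To close the estimates I would represent $u^\om$ by single- and double-layer potentials on $\p\mC_k$ and invoke the separation hypothesis \eref{separation}, which guarantees that on the restricted set $\mC_k^\diamond$ the potential is smooth up to both interfaces, with tangential derivatives bounded uniformly in $\delta_k$.

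The hard part is pinning down the logarithmic factor $\ln\delta_k$ in the remainders. It originates near the endpoints $\p\mL_k$, where in the insulating limit $u^\om$ exhibits the crack-tip behaviour $\sim r^{1/2}$ and its second tangential derivative is only $O(r^{-3/2})$. Restricting to $\mL_k^\diamond=\{x\in\mL_k:\mbox{dist}(x,\p\mL_k)>c_0\}$ removes the pointwise blow-up, but the tip still enters through the layer-potential representation: integrating the two-dimensional logarithmic kernel against the thin strip near the tips converts what would otherwise be a clean $O(\delta_k^2)$ into $O(\delta_k^2\ln\delta_k)$. Making this dependence explicit and uniform over the admissible frequency band—using that $|\lambda_c(\om)|$ stays bounded away from its degenerate values there—is where the delicate work lies.
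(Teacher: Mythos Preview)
Your direct integration-across-the-slab approach differs from the paper's, and it leans on precisely the estimate the paper singles out as the outstanding technical obstacle. In the paper the normal-derivative jump is \emph{not} obtained by integrating $\p_{x_2}^2 u^\om=-\p_{x_1}^2 u^\om$ over the thickness; instead one writes $u^\om=H_k+\mS_{\mC_k}\phi_k$ with $\phi_k$ solving $(\lambda I-\mK_{\mC_k}^*)\phi_k=\p_\nu H_k$, evaluates $\mK_{\mC_k}^*\phi_k$ explicitly at $(s,\pm\delta_k)$ using the rectangular geometry, and reduces the top/bottom difference to a one-dimensional Poisson-type operator
\[
\mathcal T_k\phi_k^\sharp(s)=\f{1}{2\pi}\int \f{2\delta_k}{|s-s'|^2+(2\delta_k)^2}\,\phi_k^\sharp(s')\,ds'
\]
acting on $\phi_k^\sharp=(1-\lambda_c^{-1})[\p_\nu u^\om]_k$. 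The identity $(\lambda-\f12)\phi_k^\sharp=O(\delta_k\ln\delta_k)$, and hence $[\p_\nu u^\om]_k=O(\delta_k\ln\delta_k)$, follows from explicit evaluation of $\mathcal T_k$. The logarithm comes from the integral $\int \f{2\delta_k\,|s-s'|}{|s-s'|^2+(2\delta_k)^2}\,ds'\sim\delta_k|\ln\delta_k|$, which appears when one splits off the limit value $\f12\phi_k^\sharp(s)$ and bounds $\mathcal T_k[\phi_k^\sharp(\cdot)-\phi_k^\sharp(s)]$ via Lipschitz continuity of $\phi_k^\sharp$. It has nothing to do with the crack tips: the whole computation lives on $\mL_k^\diamond$, at distance $\ge c_0/2$ from $\p\mL_k$, and the tip contribution is absorbed into the $O(\delta_k)$ error $E(s)$.

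Your route, by contrast, expresses both remainders through $\int_{-\delta_k}^{\delta_k}\p_{x_1}^2 u^\om\,dt$ and therefore needs $\|\p_{x_1}^2 u^\om\|_{L^\infty(\mC_k^\diamond)}$ bounded uniformly in $\delta_k$. The layer-potential representation together with \eref{separation} does \emph{not} deliver this by itself: the density $\phi_k$ and the geometry both depend on $\delta_k$, and the two faces of $\p\mC_k$ collapse onto each other, so differentiating under the integral produces exactly the near-singular contributions one is trying to control. Remark~\ref{Rk:JumpCondition} says so explicitly: a uniform estimate for the Hessian of $u^\om$ in $\mC_k^\diamond$ is identified as the missing ingredient for the general case. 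The paper's boundary-integral argument sidesteps it by working only with the one-dimensional trace quantity $\phi_k^\sharp$ and the explicit Poisson kernel, needing merely that $\phi_k^\sharp$ be Lipschitz on $\mL_k^\diamond$ (which follows because $\p_{x_1}u^\om$ again solves the transmission problem). As written, your closing step is the gap, and the tip-singularity heuristic is aimed at the wrong source of the logarithm.
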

\begin{figure}[ht!]
\centering
 \includegraphics[width=10cm, height=3cm]{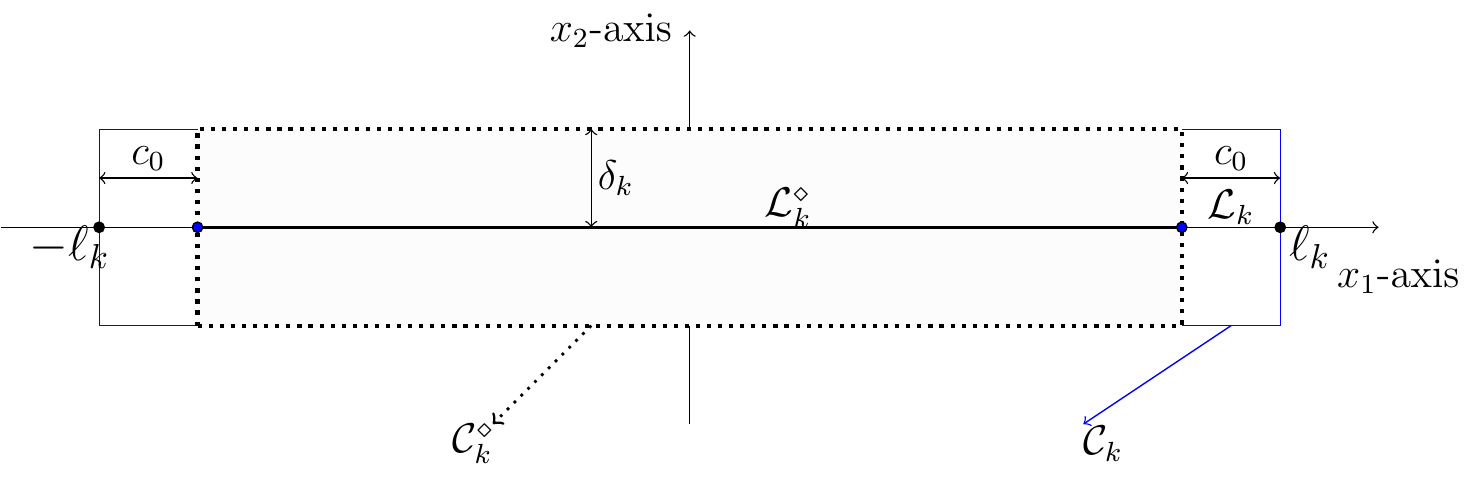}\\
  \caption{Linear object model.}\label{Fig:lineCrack}
\end{figure}
\begin{proof}
The potential $u^\om$ in \eref{Eq:uw} can be expressed as
   \begin{eqnarray*}
u^\omega(x)&=&H_k(x)+\int_{\p \mC_k}\Gamma(x,x') \phi_k (x') ds_{x'}\q\mbox{in }~\Om,
\end{eqnarray*}
where $\Gamma(x,x'):= -\f{1}{2\pi} \ln|x-x'|$ is the fundamental solution of Laplacian in two dimensions, $H_k$ is a harmonic function in a neighborhood of $\mC_k$, and
\begin{equation}\label{phik}
\phi_k = \left(\lambda_c -1\right) \f{\p u^\omega}{\p\nu}\bigg|_-= \left(1-\f{1}{\lambda_c}\right) \f{\p u^\omega}{\p\nu}\bigg|_+  \quad \mbox{on } ~\p \mC_k.
\end{equation}
To be precise, $H_k$ can be expressed in the following form:
$$H_k=-\f{1}{\gamma_b^\omega}\mS_{\Om}g  +
\mD_{\Om}(\Lambda_\om(g))
+ \sum_{j=1}^{N_D}\mS_{D_j}\psi_{D_j}
+ \sum_{j\neq k}\mS_{\mC_j}\phi_j,$$
where $\psi_{D_j} = \left(\f{\gamma_d^\omega}{\gamma_b^\omega}-1\right)\f{\p u^\omega}{\p \nu}\bigg|_-$ on ${\p D_j}$, $\mS_{\Omega}$, $\mS_{D_j}$, $\mS_{\mC_j}$  are the single layer potentials over the domain $\Omega,~D_j,~\mC_j$ respectively and $\mD_{\Omega}$ is the double layer potential over the domain $\Omega$; see \cite{Ammari2004,Ammari2007,Seo2012}.

From the transmission conditions across $\mC_k$, $\phi_k$ satisfies
\begin{equation}\label{Eq:lamC2}
\left(\lambda I-\mK_{\mC_k}^*\right)\phi_k =\f{\p H_k}{\p\nu} \quad \mbox{on } ~\p \mC_k,
\end{equation}
where $\lambda =\f{\lambda_c+1}{2(\lambda_c-1)}$ and  $\mK_{\mC_k}^* \phi $ is given by
\begin{equation}\label{Eq:komega}
  \mathcal{K}^*_{\mC_k} \phi(x) := \f{1}{2\pi}\int_{\p{\mC_k}} \f{\langle x'-x, \nu_{x} \rangle}{|x-x'|^2}\phi(x')ds_{x'},
  \quad x\in \p {\mC_k}.
\end{equation}

Now, we compute $\left[ \f{\p u^\omega}{\p\nu}\right]_k(x)$ for $x=(s,0)\in \mL_k^\diamond$.
From \eref{Eq:lamC2}, we have
\begin{equation}\label{Eq:lamC2-1}
\begin{array}{lll}
&&\left(\lambda I-\mK_{\mC_k}^*\right)\phi_k(s,\delta_k)- \left(\lambda I-\mK_{\mC_k}^*\right)\phi_k(s,-\delta_k)\\
&&~~~~~~~~~~~~~=\int_{-\delta_k}^{\delta_k}\f{d^2 H_k(s,t)}{d t^2}dt  \quad \mbox{for } ~(s,0)\in \mL_k^\diamond.
\end{array}
\end{equation}
For convenience, let $\p{\mC_k} = \p{\mC_k}^\sharp\cup \mL_k^+ \cup \mL_k^- $ where $\p{\mC_k}^\sharp=\{ x\in \p{\mC_k}: |x_1|> \ell_k-c_0/2\}$ and $\mL_k^\pm=\{x\in \p{\mC_k}: |x_1|\leq \ell_k-c_0/2, x_2=\pm\delta_k\}$ as described in Figure \ref{Fig:CrackDef}.
\begin{figure}[ht!]
\centering
 \includegraphics[width=10cm, height=3cm]{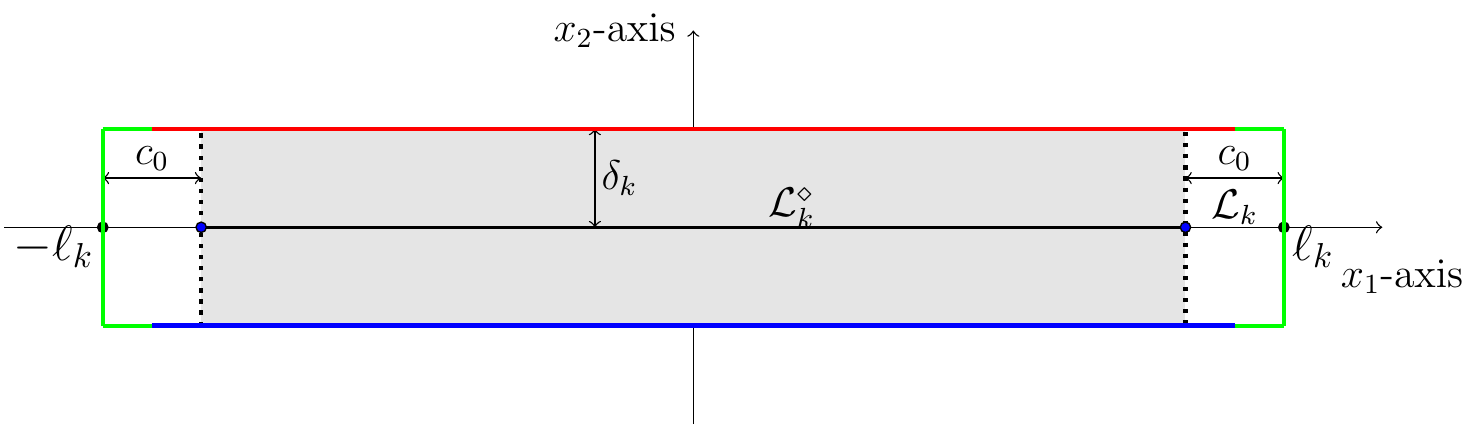}\\
  \caption{$\p{\mC_k}$ can be divided as three parts; green color stands for $\p{\mC_k}^\sharp$, red color is $\mL_k^+$ and blue color is $\mL_k^-$.}\label{Fig:CrackDef}
\end{figure}

Then, when $x=(s,-\delta_k)$, $ \mK_{\mC_k}^*\phi_k(x)$ can be written as
\begin{eqnarray*}
\hspace{-2.2cm}  2\pi\mK_{\mC_k}^*\phi_k(s,-\delta_k) &=& \int_{\p{\mC_k}^\sharp}\f{\langle x'-(s,-\delta_k), \nu_{x} \rangle}{|(s,-\delta_k)-x'|^2}\phi_k(x') dx'  \\
  & &+ \int_{\mL_k^+}\f{\langle x'-(s,-\delta_k), \nu_{x} \rangle}{|(s,-\delta_k)-x'|^2} \phi_k(x')dx'  +\int_{\mL_k^-}\f{\langle x'-(s,-\delta_k), \nu_{x} \rangle}{|(s,-\delta_k)-x'|^2} \phi_k(x')dx'.
\end{eqnarray*}
Since $\mL_k$ is a line, $x'-(s,-\delta_k)$ is perpendicular to $\nu_{x}$ for $x'\in \mL_k^-$ and $\langle x'-(s,-\delta_k),\nu_x\rangle = 2\delta_k$ for $x'\in \mL_k^+$, which directly yield
\begin{eqnarray}
\hspace{-2.2cm} 2\pi \mK_{\mC_k}^*\phi_k(s,-\delta_k) &=& \int_{\p{\mC_k}^\sharp}\f{\langle x'-(s,-\delta_k), \nu_{x} \rangle}{|(s,-\delta_k)-x'|^2}\phi_k(x') dx' + \int_{\mL_k^+}\f{\langle x'-(s,-\delta_k), \nu_{x} \rangle}{|(s,-\delta_k)-x'|^2} \phi_k(x')dx'\nonumber\\
&=&\int_{-\ell_k+c_0/2}^{\ell_k-c_0/2} \f{2\delta_k}{|s-s'|^2+(2\delta_k)^2}\phi_k(s')d s' +\int_{\p{\mC_k}^\sharp}\f{\langle x'-(s,-\delta_k), \nu_{x} \rangle}{|(s,-\delta_k)-x'|^2}\phi_k(x') dx'.\nonumber\\ \label{Eq:Kminus}
\end{eqnarray}
Analogously, it follows that
\begin{eqnarray}
\hspace{-2.2cm} 2\pi \mK_{\mC_k}^*\phi_k(s,\delta_k)
&=&\int_{-\ell_k+c_0/2}^{\ell_k-c_0/2} \f{2\delta_k}{|s-s'|^2+(2\delta_k)^2}\phi_k(s')d s' +\int_{\p{\mC_k}^\sharp}\f{\langle x'-(s,\delta_k), \nu_{x} \rangle}{|(s,\delta_k)-x'|^2}\phi_k(x') dx'.\nonumber\\ \label{Eq:Kplus}
\end{eqnarray}
since $x'-(s,\delta_k)$ is perpendicular to $\nu_{x}$ for $x'\in \mL_k^+$ and $\langle x'-(s,\delta_k),\nu_x\rangle = 2\delta_k$ for $x'\in \mL_k^-$.
Combining formulas (\ref{Eq:Kminus}) and (\ref{Eq:Kplus}) yields
\begin{equation}\label{Eq:lamC2-2}
\mK_{\mC_k}^*\phi_k(s,\delta_k)- \mK_{\mC_k}^*\phi_k(s,-\delta_k) =\mathcal{T}_{k}\phi^\sharp_k(s) +E(s)\quad \mbox{for } ~(s,0)\in \mL_k^\diamond,
\end{equation}
where
\begin{eqnarray}
&&\mathcal{T}_{k}\phi^\sharp_k(s) =\f{1}{2\pi}\int_{-\ell_k+c_0/2}^{\ell_k-c_0/2} \f{2\delta_k}{|s-s'|^2+(2\delta_k)^2}\phi_k^\sharp(s')d s',\label{Eq:TphiSharp}\\
& &\phi_k^\sharp(s) =\phi_k(s,\delta_k)-\phi_k(s,-\delta_k)=\left(1-\f{1}{\lambda_c}\right)\left[\f{\p u^\omega}{\p\nu}\right]_k(s,0), \label{Eq:phiSharp}
\end{eqnarray}
and
$E(s)$ for $(s,0)\in \mL_k^\diamond$ is given by
$$
E(s)=\f{1}{2\pi}\int_{\p{\mC_k}^\sharp }\left\{ \f{\langle x'-(s,\delta_k), \nu_{x} \rangle}{|(s,\delta_k)-x'|^2} -\f{\langle x'-(s,-\delta_k), \nu_{x} \rangle}{|(s,-\delta_k)-x'|^2} \right\}\phi_k(x')ds_{x'}.
$$
Since $H(x)$ is harmonic in a neighborhood of $\mC_k$,
\begin{equation}\label{Eq:Hs}
\sup_{(s,0)\in \mL_k^\diamond}\left|\int_{-\delta_k}^{\delta_k}\f{d^2 H_k(s,t)}{d t^2}dt\right|\le 2\delta_k \|\na\na H_k\
\|_{C^2(\mC_k)}=O(\delta_k).
\end{equation}
Moreover, the term $E(s)$ can be estimated by the mean-value theorem. Using the fact that $c_0/2\leq\mbox{dist}(\mL_k^\diamond, \p{\mC_k}^\sharp)$ and $|\langle x'-(s,\pm\delta_k), \nu_{x} \rangle|\leq 2\delta_k$, we get
\begin{equation}\label{Eq:Es}
\sup_{(s,0)\in \mL_k^\diamond}\left| E(s)\right|
\le  \f{16}{c_0^2} \|\phi_k\|_{L^1(\mC^\sharp_k)}\delta_k=O(\delta_k).
\end{equation}
Therefore, it follows from (\ref{Eq:lamC2-1})-(\ref{Eq:Es}) that
\begin{equation}\label{Eq:T}
  (\lambda I - \mathcal{T}_k) \phi_k^\sharp(s) = \int_{-\delta_k}^{\delta_k}\f{d^2 H_k(s,t)}{d t^2}dt+E(s)\quad \mbox{for } ~(s,0)\in \mL_k^\diamond.
\end{equation}
Note that
\begin{eqnarray}\label{Eq:TphiK}
   \mathcal{T}_k\phi_k^\sharp(s)&=& \f{1}{2\pi}\int_{-\ell_k+c_0/2}^{\ell_k-c_0/2} \f{2\delta_k}{|s-s'|^2+(2\delta_k)^2}(\phi_k^\sharp(s')-\phi_k^\sharp(s))d s', \nonumber\\ && + \phi_k^\sharp(s)\f{1}{2\pi}\int_{-\ell_k+c_0/2}^{\ell_k-c_0/2} \f{2\delta_k}{|s-s'|^2+(2\delta_k)^2} d s'.
\end{eqnarray}
Direct computation yields
\begin{eqnarray*}
 && \f{1}{2\pi}\int_{-\ell_k+c_0/2}^{\ell_k-c_0/2} \f{2\delta_k}{|s-s'|^2+(2\delta_k)^2} d s',\\
 &&=\f{1}{2\pi}\left(\arctan\f{\ell_k-c_0/2-s}{2\delta_k}-\arctan\f{-\ell_k+c_0/2-s}{2\delta_k}\right),\nonumber\\
 &&= \f{1}{2}+\left(-\f{2}{\ell_k-c_0/2-s}+\f{2}{-\ell_k+c_0/2-s}\right)\delta_k+O(\delta_k^3)\nonumber,\\
 && =\f{1}{2} -\f{4(\ell_k-c_0/2)}{(\ell_k-c_0/2)^2-s^2}\delta_k+O(\delta_k^3).
\end{eqnarray*}
For $s\in \mL^\diamond_k$, we have
\begin{eqnarray*}
(\ell_k-c_0/2)^2-s^2\geq   (\ell_k-c_0/2)^2-(\ell_k-c_0)^2 =c_0(\ell_k-3c_0/4).
\end{eqnarray*}
Therefore, we obtain that
\begin{eqnarray}\label{Eq:Tkpt1}
\f{1}{2\pi}\int_{-\ell_k+c_0/2}^{\ell_k-c_0/2} \f{2\delta_k}{|s-s'|^2+(2\delta_k)^2} d s' = \f{1}{2}+O(\delta_k).
 \end{eqnarray}

Since $\f{\p \gamma^\om}{\p x_1}=0$ in a neighborhood of $\mC^\diamond_k$,  $\f{\p u^\om}{\p x_1}$ is a weak solution of $\na\cdot (\gamma^\om \f{\p u^\om}{\p x_1})=0$ near the region $\mC^\diamond_k$. Hence,  $\phi_k^\sharp(s)$ is differentiable for $(s,0)\in \mL_k^\diamond$, and  $\phi_k^\sharp(s)-\phi_k^\sharp(s')$ can be estimated by
\begin{equation}\label{Eq:estPhi}
  \hspace{-1cm} |\phi_k^\sharp(s)-\phi_k^\sharp(s')|\leq \left\|\f{d}{ds}\phi_k^\sharp(s)\right\|_{L^\infty(\mL_k^\diamond)}|s-s'|\quad \mbox{for } ~(s',0),~(s,0)\in \mL_k^\diamond.
\end{equation}
From the above estimate (\ref{Eq:estPhi}), we have
\begin{eqnarray}\label{Eq:Tkpt2}
\hspace{-2.5cm}&& \left|\f{1}{2\pi}\int_{-\ell_k+c_0/2}^{\ell_k-c_0/2} \f{2\delta_k}{|s-s'|^2+(2\delta_k)^2}(\phi_k^\sharp(s')-\phi_k^\sharp(s))d s' \right|
\\
\hspace{-2.5cm}&&\quad\leq \f{\left\|\f{d}{ds}\phi_k^\sharp(s)\right\|_{L^\infty(\mL_k^\diamond)}}{2\pi}\int_{-\ell_k+c_0/2}^{\ell_k-c_0/2} \f{2\delta_k}{|s-s'|^2+(2\delta_k)^2}|s-s'|ds'=-\left\|\f{d}{ds}\phi_k^\sharp(s)\right\|_{L^\infty(\mL_k^\diamond)}\f{2\delta_k}{\pi} \ln2\delta_k\nonumber\\
\hspace{-2.5cm}&&\quad \quad +\f{\delta_k \left\|\f{d}{ds}\phi_k^\sharp(s)\right\|_{L^\infty(\mL_k^\diamond)}}{2\pi} \ln\left[((\ell_k-\f{c_0}{2}-s)^2+4\delta_k^2)((\ell_k-\f{c_0}{2}+s)^2+4\delta_k^2)\right]  \nonumber\\
\hspace{-2.5cm}&&\quad =O(\delta_k\ln\delta_k).\nonumber
\end{eqnarray}
From (\ref{Eq:Tkpt1}) and (\ref{Eq:Tkpt2}), $\mathcal{T}_k\phi_k^\sharp(s)$ in (\ref{Eq:TphiK}) can be estimated by
\begin{equation}\label{Eq:Tkpt3}
  \mathcal{T}_k\phi_k^\sharp(s) = \f{1}{2}\phi_k^\sharp(s)+O(\delta_k\ln\delta_k).
\end{equation}
Combining (\ref{Eq:phiSharp}), (\ref{Eq:T}) and (\ref{Eq:Tkpt3}), we arrive at
\begin{eqnarray}
  \left(\lambda-\f{1}{2}\right)\phi_k^\sharp(s) = O(\delta_k\ln\delta_k),\nonumber\\
  \left[\f{\p u^\omega}{\p\nu}\right]_k(s,0) = \f{\lambda_c}{\lambda_c-1}\phi_k^\sharp(s) = O(\delta_k\ln\delta_k)\quad \mbox{for } ~(s,0)\in \mL_k^\diamond.\label{Eq:LamT}
\end{eqnarray}
It easily follows from the above analysis and transmission condition that
\begin{eqnarray*}
  [u^\omega]_k(s) &=& u^\omega(s,\delta_k)-u^\omega(s,-\delta_k) = \int_{-\delta_k}^{\delta_k} \f{d }{dt}u^\omega(s,t)dt\\
  &=& 2\delta_k \f{\p u^\omega}{\p x_2}(s,-\delta_k)|_- + \int_{-\delta_k}^{\delta_k} \f{d }{dt}u^\omega(s,t)- \f{\p u^\omega}{\p x_2}(s,-\delta_k)|_{-} dt
\end{eqnarray*}
where
\begin{eqnarray*}
  &&|\int_{-\delta_k}^{\delta_k} \f{d }{dt}u^\omega(s,t)- \f{\p u^\omega}{\p x_2}(s,-\delta_k)|_{-} dt|\\
  &\leq& \int_{-\delta_k}^{\delta_k} \left|\f{d }{dt}u^\omega(s,t)- \f{\p u^\omega}{\p x_2}(s,-\delta_k)|_{-}\right| dt\leq C\delta_k^2\ln\delta_k.
\end{eqnarray*}
Then we have
\begin{equation}\label{Eq:JumpEstimate}
  [u^\omega]_k(s) = \f{2\delta_k}{\lambda_c(\omega)} \f{\p u^\omega}{\p \nu}(s,-\delta_k)|_+ + O(\delta_k^2\ln\delta_k)\quad \mbox{for } ~(s,0)\in \mL_k^\diamond.
\end{equation}
\end{proof}
\begin{remark}\label{Rk:JumpCondition}
 The approximation formulas given in \eref{Eq:NuJ} can be proved for special cases of thin insulators by using layer potential techniques. Unfortunately, the proof of (\ref{Eq:NuJ}) in general still remains a challenging issue due to technical difficulties in obtaining a uniform estimate for the Hessian matrix of $u^\omega$ in $\mathcal{C}_k^\diamond$  with respect to $\delta_k$. For the numerical proof of the jump conditions in \eref{Eq:NuJ}, we refer to \cite{Zhang2015}.
\end{remark}
\subsection{Effective zero-thickness model}
It is very difficult to numerically solve $u^\omega$ in (\ref{Eq:uw})  due to the potential discontinuity across the thin insulating inhomogeneities. One way to deal with thin objects is to treat them as lower dimensional interfaces. In our case, the thin insulating objects can be considered as curves. Based on the above Theorem \ref{Th:JumpCondition}, we can describe an effective zero-thickness model by imposing the jump conditions of $[u^\omega]_k$ and $\left[ \f{\p u^\omega}{\p\nu}\right]_k$ on the curves $\mathcal L_k$.  This means that the potential $u^\om$ can be approximated by the corresponding potential $\widetilde u^{\om}$ satisfying the following effective zero-thickness model:
\begin{equation}\label{modelcell}
\left\{\begin{array}{l}
 \nabla \cdot ((\gamma^{\omega}_b+ (\gamma^{\omega}_d-\gamma^{\omega}_b)\chi_D)\nabla \widetilde u^\om)= 0 ~~\q \textrm{in}\q \Om\setminus \cup_{k=1}^{N_C}\mathcal L_k,\\
\left[\f{\p}{\p\nu}  \widetilde u^{\om}\right]_{\mathcal L_k} =0,~~~k=1,2,\ldots, N_C,\\
\left[\widetilde u^\om\right]_{\mathcal L_k}= 2\delta_k\f{1}{\lambda_c(\omega)} \f{\p \widetilde u^\omega}{\p\nu}|_+,~~~k=1,2,\ldots, N_C,\\
 \gamma^\omega_b \f{\p \widetilde u^\omega}{\p \nu} = g \quad\mbox{on}~\p \Om,
\end{array}\right.
\end{equation}
where $\chi_D$ is the characteristic function of $D$ and
 \begin{equation}\label{crack-model-jump}
  \begin{array}{lll}
[\widetilde u^\omega(x)]_{\mathcal L_k}&:=&\lim_{s\rightarrow0^+} \left(\widetilde u^\omega(x+s\nu_x )-\widetilde u^\omega(x-s\nu_x )\right) \\
 \left[ \f{\p \widetilde u^\omega}{\p\nu}(x)\right]_{\mathcal L_k}&:=& \lim_{s\rightarrow 0^+} \left(\f{\p \widetilde u^\omega}{\p\nu}(x+s\nu_x )- \f{\p \widetilde u^\omega}{\p\nu}(x-s\nu_x )\right)
 \end{array}
~\q(x\in \mathcal{L}_k).
\end{equation}
Since $u^\om \approx \tilde u^\omega$ in $\{ x\in \Om~: \mbox{dist}(x,\p\Om) < \f{d_0}{2}\}$, the forward model (\ref{Eq:uw}) and the effective zero-thickness model (\ref{modelcell}) have basically the same Neumann-to-Dirichlet data in terms of the inverse problem.
From now on, let $u^\om$ denote the solution of (\ref{modelcell}) for ease of notation. From the above zero-thickness model, the boundary condition along curve $\mathcal{L}_k$ depends on thickness $\delta_k$ of thin insulating objects as well as the value of $\lambda_c(\omega)$ which is related with injected current frequency $\omega$. From the above Theorem \ref{Th:JumpCondition}, the jump of the potential across $\mC_k$ depends on angular frequency $\omega$ as well as the thickness $\delta_k$. At high frequencies, the admittivity ratio $|\lambda_c(\omega)|$ is away from 0, and there will be no potential jump when $\delta_k\approx 0$. Whereas, at low frequencies, $|\lambda_c(\omega)|$ is close to zero since $\sigma_c/\sigma_b\approx 0$ and potential jump happens across $\mC_k$. For easy understanding, changes of potential distribution near thin insulating objects are given in Figure \ref{Fig:vjumpSimu}.
\begin{figure}[ht!]
  \centering
  \includegraphics[scale=1.2]{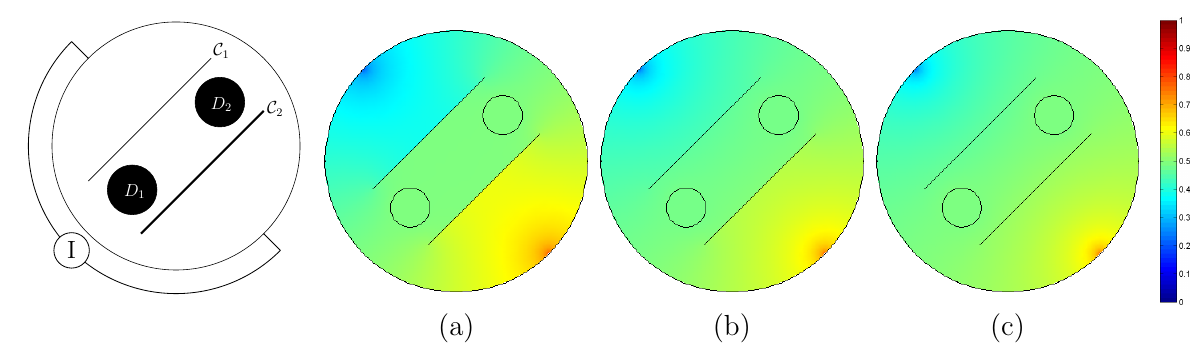}\\
  \caption{Changes of potential distribution near thin insulating objects with frequencies: (a) 10Hz, (b) 10kHz, (c) 500kHz.}\label{Fig:vjumpSimu}
\end{figure}
Based on the above analysis, we consider separately the following two cases \cite{Zribi2011}:
\begin{itemize}
 \item High-frequency case: $\delta_k\approx 0$ and $0<c_1\leq|\lambda_c(\omega)|$.
 \item Low-frequency case: $|\lambda_c(\omega)|\approx 0$ and $\delta_k\approx 0$ with $|\lambda_c(\omega)|^{-1}\delta_k \approx \beta$ and $0<\beta<\infty$.
\end{itemize}
In the upcoming sections, asymptotic formulas connecting the boundary potential and inhomogeneities are used to derive an explicit formula for identification of thin insulating objects and small conductive inhomogeneities inside $\Omega$.
\subsection{High-frequency case: \boldmath{$\delta_k\approx 0$ and $0<c_1\leq|\lambda_c(\omega)|$ }}
In high-frequency case, we suppose that the injected current frequency $\omega$ is not that low, so that $|\lambda_c(\omega)|$ is away from zero. When the thickness $\delta_k$ goes to zero, the potential jump along $\mathcal{C}_k$ also goes to zero according to approximation formula (\ref{Eq:NuJ}) as well as Figure \ref{Fig:vjumpSimu}. Therefore, the proposed problem can be regarded as traditional impedance boundary value problem and the influence of thin insulating inhomogeneities on the high-frequency current-voltage data is very weak. In this case, the following boundary voltage asymptotic expansion holds at high-frequencies. For detailed analysis and similar proof, one may refer to \cite{Ammari2006,Beretta2003,Beretta2001,Frieman1989} in which the proof can be immediately extended to the complex valued equation.

\begin{theorem}[Asymptotic expansion at high-frequencies]\label{Th:highasy} Let $\lambda_c(\omega)$ and $\delta_k$ satisfy the conditions stated in high-frequency case. Assume that $u^\om$ is a solution of the effective zero-thickness model (\ref{modelcell}) and $u_0$ is the solution of equation (\ref{Eq:uw}) with $\gamma^\omega = \gamma^\omega_b$. Assume that all $\mathcal L_k$ are line segments with endpoints $P_k,Q_k$. For $x\in\p\Omega$, when the injection current frequency is high, the perturbations of voltage potential $u^\omega$ due to small conductive objects $D_k$ and thin insulating objects $\mC_k$ can be expressed as
\begin{eqnarray}\label{Eq:SepU}
&& \left(-\f{1}{2} {I}+\mathcal{K}_\Omega\right)[u^\omega-u_0](x)\nonumber \\
&=& - \sum_{k=1}^{N_C}\int_{\mathcal{L}_k} \delta_kA_k(x',\lambda_c(\omega))\nabla u_0(x')\cdot\nabla \Gamma(x,x')ds_{x'}\nonumber \\
&&-\delta_D^2 \sum_{k=1}^{N_D}\nabla \Gamma(x,z_k)\cdot
M(\lambda_d(\omega),B_k)\nabla u_0(z_k) + O(\delta_k^2)+ O
(\delta_D^3),
\end{eqnarray}
where $A_k(x,\lambda_c(\omega))$ is a $2\times 2$ symmetric matrix whose eigenvectors are unit normal vector
 $\nu_k(x)$ and unit tangential vector $\tau_k(x)$ to $\mL_k$ and the corresponding eigenvalues are $2(1-\f{1}{\lambda_c(\omega)})$ and $2\left(\lambda_c(\omega)-1\right)$, respectively. Moreover, $M(\lambda_d(\omega),B_k)$ is polarization tensor given by
\begin{equation}\label{Eq:GPT}
M_{ij}:=\int_{\p B_k}y^j(\lambda_d(\omega)I-\mathcal{K}^*_{B_k})^{-1}(\nu_x\cdot\na x^i)(y)ds_y,\quad i,j=1,2,
\end{equation}
with \begin{equation}\label{Eq:lamD}
  \lambda_d(\omega) = \f{(\sigma_d+\sigma_b)+i\omega(\epsilon_d+\epsilon_b)}{2((\sigma_d-\sigma_b)
  -i\omega(\epsilon_d-\epsilon_b))}. \end{equation}
\end{theorem}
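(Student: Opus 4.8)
The plan is to read \eref{Eq:SepU} as a boundary perturbation identity and to assemble it from two independent leading-order contributions — a dipole layer along each insulating segment $\mL_k$ and a point dipole at each conductor center $z_k$ — which are then superposed, the pairwise interactions being suppressed by the separation hypothesis \eref{separation}. Set $w:=u^\om-u_0$. Since the solution $u^\om$ of \eref{modelcell} and the background solution $u_0$ both carry the Neumann datum $g$ against the \emph{same} admittivity $\gamma_b^\om$ on $\p\Om$, subtracting the two boundary conditions gives $\f{\p w}{\p\nu}=0$ on $\p\Om$. Away from the inclusions $w$ solves the background equation, so Green's identity with $\Gamma$ represents $w$ in $\Om$ as a double layer carrying its own boundary trace plus a field $F$ generated purely by the inclusion sources (the jump data on the $\mL_k$ and the transmission densities on the $\p D_k$). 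Because $\f{\p w}{\p\nu}=0$, the single-layer boundary term drops out, and taking the interior trace and invoking the jump relation for the double layer isolates the operator on the left of \eref{Eq:SepU}:
\begin{equation*}
\left(-\f12 I+\mK_\Om\right)[u^\om-u_0]=-F\q\mbox{on }\p\Om.
\end{equation*}
The entire task is thus to expand $F$ to leading order on $\p\Om$, using that all inclusions are separated from $\p\Om$ by $d_0$ so that $F$ is smooth there.

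For the conductors this is the classical polarization-tensor argument. Rescaling $D_k=z_k+\delta_DB_k$ and solving the transmission integral equation on $\p D_k$ shows the density is $\delta_D\na u_0(z_k)$ to leading order, so $F$ receives the dipole field $\delta_D^2\,\na\Gamma(x,z_k)\cdot M(\lambda_d(\om),B_k)\na u_0(z_k)$ with remainder $O(\delta_D^3)$; here the local trace of $u^\om$ has been replaced by that of $u_0$, which costs one further power of $\delta_D$. The tensor $M$ of \eref{Eq:GPT} and the complex contrast $\lambda_d(\om)$ of \eref{Eq:lamD} then appear verbatim, the only departure from the real-valued theory being that $(\lambda_d(\om)I-\mK^*_{B_k})$ is inverted over $\mathbb{C}$; invertibility persists because the Neumann--Poincar\'e spectrum of $\mK^*_{B_k}$ is real and $\lambda_d(\om)$ lies off it for $\om>0$. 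I would carry this step by direct extension of \cite{Ammari2006,Beretta2003,Ammari2004,Ammari2007} to the admittivity setting.

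For the thin insulators I would use the jump conditions of \eref{modelcell} (equivalently Theorem \ref{Th:JumpCondition}): the normal flux is continuous across $\mL_k$ while $[u^\om]=\f{2\delta_k}{\lambda_c(\om)}\f{\p u^\om}{\p\nu}\big|_+$. In the high-frequency regime $|\lambda_c(\om)|\ge c_1>0$ the strip $\mC_k$ is an order-one-contrast inclusion that is thin in one direction only, so its contribution to $F$ is a dipole layer distributed along $\mL_k$ of size $O(\delta_k)$. To read off $A_k$ I would pass to the thin-strip limit of the polarization tensor and resolve $\na u_0$ into its parts: the component that must cross the insulating gap picks up the factor $2\bigl(1-\f{1}{\lambda_c(\om)}\bigr)$, while the component tangent to $\mL_k$ sees the contrast directly and picks up $2\bigl(\lambda_c(\om)-1\bigr)$. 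Since each $\mL_k$ is a straight segment there is no curvature correction, and these are exactly the eigenvalues of $A_k$ along $\nu_k$ and $\tau_k$. Summing the two families of sources, discarding the pairwise interactions by \eref{separation}, and transferring $F$ to $\p\Om$ produces \eref{Eq:SepU}.

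I expect the insulating term to be the delicate part. The conductor expansion is a textbook rescaling, but identifying $A_k$ and, above all, controlling its remainder uniformly down to $O(\delta_k^2)$ requires a bound on the second derivatives of $u^\om$ in a fixed neighbourhood of $\mL_k$ that is uniform in $\delta_k$ — precisely the estimate flagged in Remark \ref{Rk:JumpCondition}. For straight segments with $|\lambda_c(\om)|$ bounded away from $0$ this can be obtained by the local energy and boundary-layer estimates of \cite{Beretta2003,Beretta2001}, which is why the hypotheses restrict to line segments and to the high-frequency case. The remaining technical point is the behaviour at the endpoints $P_k,Q_k$, where the flat-strip analysis degenerates; there I would localize using the cut-off set $\mL_k^\diamond$ of Theorem \ref{Th:JumpCondition} and absorb the tip contributions into the $O(\delta_k^2)$ error.
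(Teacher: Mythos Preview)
The paper does not actually prove this theorem: it simply states that ``for detailed analysis and similar proof, one may refer to \cite{Ammari2006,Beretta2003,Beretta2001,Frieman1989} in which the proof can be immediately extended to the complex valued equation.'' Your sketch is precisely the argument carried out in those references --- the representation of $u^\om-u_0$ via Green's identity so that $(-\tfrac12 I+\mK_\Om)[u^\om-u_0]=-F$ on $\p\Om$, the polarization-tensor expansion for the conductors, the anisotropic polarization matrix $A_k$ for the thin strips from \cite{Beretta2003,Beretta2001}, and the superposition justified by the separation assumption \eref{separation}. Your identification of the delicate points (uniform Hessian control in $\delta_k$, endpoint behaviour, and the complex-valued invertibility of $\lambda_d(\om)I-\mK^*_{B_k}$) is accurate and matches what the paper flags in Remark~\ref{Rk:JumpCondition}; in that sense you have supplied more than the paper itself does.
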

\begin{remark}\label{rmk:theorem1}
In Theorem \ref{Th:highasy}, the matrix $A_k$ is only related to the geometry of the thin insulators and admittivity ratio $\lambda_c(\omega)$. Since $\sigma_c$ is very small, the ratio $\lambda_c(\omega)$ changes much with respect to the operating frequency $\omega$. On the other hand, polarization tensor $M(\lambda_d(\omega), B_k)$ is related with the geometry of small conductors and admittivity distribution. When $D_k$ is a circular disk, $M(\lambda_d(\omega), B_k)$ can be explicitly written as $M(\lambda_d(\omega), B_k)=\f{|B_k|}{\lambda_d(\omega)}I$. Since $\sigma_d$ is realtively large compared with the background conductivity $\sigma_b$ and $\sigma_d-\sigma_b$ is away from zero, $\lambda_d(\omega)$ varies little with respect to frequency. Thus conductive objects are insensitive to the boundary measurements.
\end{remark}
Theorem \ref{Th:highasy} shows that the measured
boundary data are influenced by insulating objects and conductive objects since the first term on the right-hand side of formula (\ref{Eq:SepU}) is only related with thin insulators while the second term is only related with small conductors. Depending on the magnitude of $\om, \delta_k$ and $\delta_D$, the dominative term on right-hand side of formula
(\ref{Eq:SepU}) may be alternative. To see the effect of $\om,
\delta_k$ and $\delta_D$ on the measured boundary data more
clearly, we need further analysis on the expansion formula in
Theorem \ref{Th:highasy}.

To avoid any confusion, we will adopt the following notations: $x=(x_1,x_2)$ denotes a point in $\mathbb{R}^2$ and $\mathbf{x}=x\cdot(1,i)$ will be the corresponding point in ${\mathbb C}$. Let $g= a\cdot\nu$ on $\p\Omega$ where $a$ is a unit vector in $\mathbb{R}^2$. Similarly, $ {\bf a}=a\cdot(1,i)$, the center $z_k$ of each small conductor $D_k$ can be expressed as ${\z_k} = z_k\cdot(1,i)$, the endpoints of line segment $\mL_k$ can be written as ${\P_k}=P_k\cdot(1,i), ~\hbox{and} ~~{\Q_k}=Q_k\cdot(1,i).
$  Then we have the following result.
\begin{theorem}[Identification of thin insulators and small conductors]\label{Th:rebarC}
Let $\lambda_c(\omega)$ and $\delta_k$ satisfy the conditions stated in high-frequency case. Assume that all the thin inhomogeneities are line segments and all the small conductive objects are disks. Then $(-\f{1}{2}I+\mathcal{K}_\Omega)[u^\omega-u_0]$ on the boundary $\p\Om$ can be expressed as
\begin{eqnarray}\label{Eq:holo}
\Re\left\{ (-\f{1}{2}I+\mathcal{K}_\Omega)[u^\omega-u_0]({\bf x})\right\} = \Re\{G^\Re(\x)\}
+O (\delta_k^2)+ O (\delta_D^3),\\
\Im \left\{(-\f{1}{2} {I}+\mathcal{K}_\Omega)[u^\omega-u_0]({\bf
x}) \right\}=\Re\left\{G^\Im(x)\right\} +O (\delta_k^2)+ O (\delta_D^3),
\end{eqnarray} where $G^\Re$ and $G^\Im$ are meromorphic  functions:
\begin{eqnarray}
\hspace{-2cm}\f{dG^\Re(\x)}{d\x}=\sum_{k=1}^{N_C}
{\mathfrak{C}_k^{\Re}(\om,\delta_k)}\left(\f{1}{\x-\Q_k}-\f{1}{\x-\P_k}\right)-
\sum_{k=1}^{N_D}{\mathfrak{D}_k^{\Re}(\om,\delta_D)}\f{1}{(\x-\z_k)^2}\label{Eq:holo2-1}\\
\hspace{-2cm}\f{d G^\Im(\x)}{d \x} = \sum_{k=1}^{N_C}{\mathfrak{C}_k^{\Im}(\om,\delta_k)}\left(\f{1}{\x-\Q_k}-\f{1}{\x-\P_k}\right)
-\sum_{k=1}^{N_D}{\mathfrak{D}_k^{\Im}(\om,\delta_D)}\f{1}{(\x-\z_k)^2}
 \label{Eq:holo2-2}\end{eqnarray}
and
 \begin{eqnarray}
\mathfrak{C}_k^{\Re}(\omega,\delta_k)&=\f{\delta_k}{\pi}\left(\Re\{(\lambda_c(\omega)-1)\}a_{\tau_k}+i\Re\{(1-\f{1}{\lambda_c(\omega)}) \} a_{\nu_k}\right)\label{Def:holo3-1}\\
\mathfrak{C}_k^{\Im}(\omega,\delta_k)&=\f{\delta_k}{\pi}\left(\Im\{\lambda_c(\omega)-1\}a_{\tau_k}+i\Im\{1-\f{1}{\lambda_c(\omega)}\} a_{\nu_k}\right)\label{Def:holo3-2}\\
{\mathfrak{D}}_k^{\Re}(\omega,\delta_D)&=-\Re\left\{\f{|B_k|\delta_D^2}{2\pi\lambda_d(\omega)}\right\}\a,
\quad \mathfrak{D}_k^{\Im}(\omega,\delta_D)
=-\Im\left\{\f{|B_k|\delta_D^2}{2\pi\lambda_d(\omega)}\right\}\a
.\label{Def:holo3-3}
 \end{eqnarray}
Here, $a_{\nu_k}=a\cdot\nu_k, a_{\tau_k}=a\cdot\tau_k$.
\end{theorem}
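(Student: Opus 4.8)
The plan is to start from the asymptotic expansion \eref{Eq:SepU} of Theorem \ref{Th:highasy} and recast its right-hand side in the complex variable $\x=x\cdot(1,i)$, showing term by term that its real and imaginary parts are real parts of explicit meromorphic functions. Two observations make this routine. First, since $g=a\cdot\nu$ on $\p\Om$, the background potential solves $\na\cdot(\gamma_b^\om\na u_0)=0$ with $\gamma_b^\om\f{\p u_0}{\p\nu}=g$, so $u_0=a\cdot x$ up to an additive constant and $\na u_0\equiv a$ is \emph{exactly} the constant field $a$; no approximation of the background is needed, and the normal/tangential components $a_{\nu_k},a_{\tau_k}$ arise by decomposing this constant vector. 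Second, writing $\tilde\Gamma(\x,\x')=-\f{1}{2\pi}\ln(\x-\x')$ for the holomorphic primitive of $\Gamma$, the Cauchy--Riemann relations give, for any constant real vector $v$ with complex form $v\cdot(1,i)$,
\[
v\cdot\na_x\Gamma(x,x')=\Re\{\,(v\cdot(1,i))\,\tilde\Gamma'(\x)\,\},\q \tilde\Gamma'(\x)=-\f{1}{2\pi(\x-\x')}.
\]
This identity turns every directional derivative of $\Gamma$ into the real part of a holomorphic function of $\x$, which is the whole engine of the argument.

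For the conductive part I would insert the disk polarization tensor $M(\lambda_d(\om),B_k)=\f{|B_k|}{\lambda_d(\om)}I$ from Remark \ref{rmk:theorem1} and $\na u_0=a$ into the second sum of \eref{Eq:SepU}, obtaining $-\delta_D^2\sum_k\f{|B_k|}{\lambda_d(\om)}\,a\cdot\na\Gamma(x,z_k)=-\delta_D^2\sum_k\f{|B_k|}{\lambda_d(\om)}\Re\{\a\,\tilde\Gamma'(\x)\}$. Because the scalar $\f{|B_k|}{\lambda_d(\om)}$ is complex while $\Re\{\a\tilde\Gamma'\}$ is real, its real and imaginary parts split off as \emph{real} multiples of $\Re\{\a\tilde\Gamma'\}$; pulling these real factors through $\Re\{\cdot\}$ shows that each of the real and imaginary parts of this term is $\Re$ of a holomorphic function with a simple pole proportional to $(\x-\z_k)^{-1}$. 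Differentiating in $\x$ produces the double poles $(\x-\z_k)^{-2}$ and identifies $\mathfrak{D}_k^\Re,\mathfrak{D}_k^\Im$ as in \eref{Def:holo3-3}.

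For the insulating part I would use the spectral structure of $A_k$: its eigenvectors $\nu_k,\tau_k$ with eigenvalues $2(1-\f{1}{\lambda_c(\om)})$ and $2(\lambda_c(\om)-1)$ give $A_k a=2(\lambda_c(\om)-1)a_{\tau_k}\tau_k+2(1-\f{1}{\lambda_c(\om)})a_{\nu_k}\nu_k$, so the integrand of the first sum of \eref{Eq:SepU} is a complex-scalar combination of $\tau_k\cdot\na\Gamma$ and $\nu_k\cdot\na\Gamma$. The line-segment hypothesis enters decisively here: along a straight $\mL_k$ the vectors $\nu_k,\tau_k$ are constant, so they factor out of the integral and, with $\x'$ running from $\P_k$ to $\Q_k$,
\[
\int_{\mL_k}\tilde\Gamma'(\x,\x')\,ds_{x'}=\f{\overline{\tau_k\cdot(1,i)}}{2\pi}\big(\ln(\x-\Q_k)-\ln(\x-\P_k)\big).
\]
Taking real and imaginary parts as before, and using $(\tau_k\cdot(1,i))\overline{(\tau_k\cdot(1,i))}=1$ together with $(\nu_k\cdot(1,i))\overline{(\tau_k\cdot(1,i))}=i$, attaches the factor $i$ to the normal component and leaves the tangential one real; differentiating the logarithms gives the simple poles $(\x-\Q_k)^{-1}-(\x-\P_k)^{-1}$ and yields $\mathfrak{C}_k^\Re,\mathfrak{C}_k^\Im$ of \eref{Def:holo3-1}--\eref{Def:holo3-2}, with $\Re\{\lambda_c-1\},\Im\{\lambda_c-1\}$ multiplying $a_{\tau_k}$ and $\Re\{1-\f{1}{\lambda_c}\},\Im\{1-\f{1}{\lambda_c}\}$ multiplying $a_{\nu_k}$.

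The manipulation is essentially algebraic once the complex framework is fixed, and the remainders $O(\delta_k^2),O(\delta_D^3)$ are simply carried along since all operations are linear and bounded. I expect the delicate points to be bookkeeping rather than analysis: fixing a consistent orientation convention (the choice $\nu_k\cdot(1,i)=i\,\tau_k\cdot(1,i)$ and the ordering of $\P_k,\Q_k$, which fix the signs in \eref{Eq:holo2-1}--\eref{Eq:holo2-2}), and correctly separating real and imaginary parts in the presence of the complex contrasts $\lambda_c(\om),\lambda_d(\om)$ --- in particular recognizing that the \emph{imaginary} part of the left-hand side is again a real part $\Re\{G^\Im\}$, produced by the imaginary parts of the material coefficients multiplying the same harmonic kernels. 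Checking that $G^\Re,G^\Im$ are genuinely meromorphic (holomorphy of $\tilde\Gamma'$ off $\mL_k\cup\{z_k\}$ and single-valuedness of the logarithmic combinations away from the segments) then completes the argument.
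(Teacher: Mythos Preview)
Your proposal is correct and follows essentially the same route as the paper: start from the asymptotic expansion \eref{Eq:SepU}, insert the disk polarization tensor and $\na u_0=a$, pass to the complex variable via the identity $v\cdot\na\Gamma=\Re\{\mathbf v/(\x-\x')\}/(-2\pi)$, integrate $1/(\x-\x')$ along the straight segment to produce $\ln\f{\x-\Q_k}{\x-\P_k}$, use $\mbox{\boldmath$\nu$}_k=i\mbox{\boldmath$\tau$}_k$ to attach the factor $i$ to the normal contribution, and differentiate. The only cosmetic difference is that you package the Cauchy--Riemann step as a single reusable identity while the paper unwinds it in coordinates; the bookkeeping concerns you flag (orientation, sign of $\P_k,\Q_k$, splitting real/imaginary parts of $\lambda_c,\lambda_d$) are exactly the ones the paper handles, and your treatment of them is consistent with theirs.
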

\begin{proof}
Since $B_k$ is a disk, the formula (\ref{Eq:GPT}) gives
$M(\lambda_d(\omega),B_k)=\f{|B_k|}{\lambda_d(\omega)}I$.  Hence,
the formula (\ref{Eq:SepU}) in Theorem \ref{Th:highasy} can be
expressed as
\begin{equation}\label{Phi0}
 \left(\f{1}{2} {I}+\mathcal{K}_\Omega\right)[u^\omega-u_0](x)=\Phi(x) + O (\delta_k^2)+
 O (\delta_D^3)\q \q (x\in\p\Om),
\end{equation}
where $\Phi$ is
\begin{equation}\label{Phi1}
\hspace{-2cm}\Phi(x)= -
\sum_{k=1}^{N_C}\delta_k\int_{\mathcal{L}_k}(A_k~a)\cdot\nabla
\Gamma(x,x')ds_{X'}-\f{\delta_D^2}{2\pi}
\sum_{k=1}^{N_D}\f{|B_k|}{\lambda_d(\omega)}\f{x-z_k}{|x-z_k|^2}\cdot
a .
\end{equation}
We use $a=a_{\nu_k}\nu_k+ a_{\tau_k}\tau_k$ to get
\begin{eqnarray}
\hspace{-2cm}\Phi(x)&=&-\f{1}{2\pi}\sum_{k=1}^{N_C}\delta_k\int_{\mathcal{L}_k}\left(2(1-\f{1}{\lambda_c(\omega)})
a_{\nu_k}\nu_k+ 2(\lambda_c(\omega)-1)a_{\tau_k}\tau_k\right)\cdot\f{x-x'}{|x-x'|^2}ds_{x'}\nonumber\\
\hspace{-2cm}&&-\f{\delta_D^2}{2\pi\lambda_d(\omega)}\sum_{k=1}^{N_D}|B_k|\f{x-z_k}{|x-z_k|^2}\cdot
a \q\q\q\q(x\in\p\Om).
 \label{Phi2}
\end{eqnarray}

 From now on, we shall identify $\mathbb{R}^2$ with the complex plane $\mathbb{C}$ and use similar ideas as those in \cite{Ammari2006,Baratchart1999}. Since $\lambda_c(\omega)$, $\lambda_d(\om)$ as well as $u^\omega$ are complex, we will consider real and imaginary parts of $\Phi(x)$ separately.

The real part of $\Phi(x)$ for $x\in\p\Om$  can be expressed as
\begin{eqnarray}\label{Eq:maincom}
\hspace{-2cm}\Re\{\Phi(\x)\}=\Re\left\{
-\f{1}{2\pi}\sum_{k=1}^{N_C}\delta_k\int_{\mathcal{L}_k}{
\f{\mbox{\boldmath $\xi$}}{\bf x- \bf x'}}d s_{\bf
x'}-\Re\left\{\f{\delta_D^2}{2\pi\lambda_d(\omega)}
\right\}\sum_{k=1}^{N_D}|B_k| \f{\bf a}{\x-\z_k}\right\},
\end{eqnarray}
where \mbox{\boldmath $\xi$}$=\xi\cdot (1,i)$ and $\xi$ is
\begin{eqnarray}\label{xi}
\xi=\Re\{2(1-\f{1}{\lambda_c(\omega)})a_{\nu_k}\nu_k+ 2(\lambda_c(\omega)-1)a_{\tau_k}\tau_k\}.
\end{eqnarray}
Since  $\mathcal L_k$ is the segment with endpoints $P_k, Q_k$,  it can be written as $P_k+t(Q_k-P_k), 0\leq t\leq 1$. Therefore, $\mathcal L_k$ has its unit tangent vector ${\mbox{\boldmath $\tau$}_k} ={\f{\Q_k-\P_k}{|\P_k-\Q_k|}}$ and its unit normal vector ${\mbox{\boldmath $\nu$}_k} = i{ \f{\Q_k-\P_k}{|\P_k-\Q_k|}}$ in $\mathbb{C}$. Hence, the integral term in (\ref{Eq:maincom}) can be written as
\begin{eqnarray*}
  \int_{\mathcal{L}_k}{\bf \f{\mbox{\boldmath $\xi$}}{x-x'}}d s_{\bf x'} &=& {|\Q_k-\P_k|} \int_0^1 \f{\mbox{\boldmath $\xi$}}{{ (\x-\P_k)}-t{ (\Q_k-\P_k)}} dt\\
  &=& \f{ \mbox{\boldmath $\xi$}|\Q_k-\P_k|}{\Q_k-\P_k}\ln{\f{\x-\P_k}{\x-\Q_k}}.
\end{eqnarray*}
From (\ref{xi}), we have
\begin{eqnarray*}
 \hspace{-2cm}&& \f{\mbox{\boldmath $\xi$}|\Q_k-\P_k|}{\Q_k-\P_k}\nonumber\\
 \hspace{-2cm}&&=\f{ |\Q_k-\P_k|}{\Q_k-\P_k}\left(2\Re\{1-\f{1}{\lambda_c(\omega)}\} a_{\nu_k}{\f{i(\Q_k-\P_k)}{|\P_k-\Q_k|}}+2\Re\{\lambda_c(\omega)-1\}a_{\tau_k}{\bf \f{Q_k-P_k}{|P_k-Q_k|}}\right)\nonumber\\
\hspace{-2cm}&&\q= 2\Re\{\lambda_c(\omega)-1\}a_{\tau_k}+i2\Re\{1-\f{1}{\lambda_c(\omega)}\} a_{\nu_k}.
\end{eqnarray*}
Therefore, (\ref{Eq:maincom}) can  be simplified as
\begin{eqnarray}\label{Phi3}
\Re\{\Phi({\bf
x})\}=\Re\left\{\sum_{k=1}^{N_C}{\mathfrak{C}_k^{\Re}(\omega,\delta_k)}\ln{\f{\x-\Q_k}{\x-\P_k}}+
\sum_{k=1}^{N_D}{\mathfrak{D}_k^{\Re}(\omega,\delta_D)}\f{1}{\x-\z_k}\right\},
\end{eqnarray}
where $\mathfrak{C}_k^{\Re}(\omega,\delta_k)$ and $\mathfrak{D}_k^{\Re}(\omega,\delta_D)$ are the quantities defined in (\ref{Def:holo3-1}) and (\ref{Def:holo3-3}).
From (\ref{Phi3}), the real part of $\Phi$ can be viewed as the real part of the meromorphic function $G^{\Re}(\x)$ given by
\begin{eqnarray}\label{Gre}
G^{\Re}(\x):=\sum_{k=1}^{N_C}{\mathfrak{C}_k^{\Re}(\omega,\delta_k)}\ln{\f{\x-\Q_k}{\x-\P_k}}
+\sum_{k=1}^{N_D}{\mathfrak{D}_k^{\Re}(\omega,\delta_D)}\f{1}{\x-\z_k}.
\end{eqnarray}
Since $G^{\Re}(\x)$ is holomorphic except at the points $\P_k, \Q_k, \z_k$ and the segments $\P_k\Q_k$ (see, for instance, \cite{Stein2003}), it has complex derivative near $\p\Om$ in the complex plane:
\begin{eqnarray}\label{Gre2}
\hspace{-2cm}\f{d G^{\Re}(\x)}{d\x} =
\sum_{k=1}^{N_C}{\mathfrak{C}_k^{\Re}(\omega,
\delta_k)}\left(\f{1}{\x-\Q_k}-\f{1}{\x-\P_k}\right)-\sum_{k=1}^{N_D}{\mathfrak{D}_k^{\Re}
(\omega,\delta_D)}\f{1}{(\x-\z_k)^2}.
\end{eqnarray}
A similar argument applies for the imaginary part of $\Phi(\x)$.
\end{proof}

The followings remark on Theorem \ref{Th:rebarC} is in due.
\begin{remark}
According to Theorem \ref{Th:rebarC}, both $G^{\Re}(\x)$ and
$G^{\Im}(\x)$ can be viewed as known quantities  from the
knowledge of $(-\f{1}{2}I+\mathcal{K}_\Omega)[u^\omega-u_0]$ on
$\p\Om$. This theorem states that $\f{d G^{\Re}({\bf x})}{d {\bf
x}}$ is a meromorphic function in $\mathbb{C}$ with simple poles
at the endpoints $\P_k,~\Q_k$ of the segments $\mathcal{L}_k$ and
poles of order $2$ at the center $\z_k$ of $D_k$.  Hence, the
residues of $\f{d G^{\Re}({\bf x})}{d {\bf x}}$ at the endpoints
are given by
\begin{eqnarray}\label{residue}
\mbox{Res} \left(\f{d G^{\Re}({\bf x})}{d {\bf x}},{\bf Q_k}\right)= \mathfrak{C}_k^{\Re}(\omega,\delta_k) = -
 \mbox{Res} \left(\f{d G^{\Re}({\bf x})}{d {\bf x}},{\bf P_k}\right).
\end{eqnarray}
The information of the center of $D_k$ is contained in the following function
\begin{eqnarray}\label{Eq:w}
w({\bf x})&:=&\sum_{k=1}^{N_D}{\mathfrak{D}_k^{\Re}}(\om,\delta_D)\f{1}{(\x-\z_k)^2}.
\end{eqnarray}
Then the function $\f{w'({\bf x})}{w({\bf x})}$ will have simple poles at the poles of $w(\x)$. Hence, these center points can be identified from boundary measurements \cite{Kang2004}.
\end{remark}

The above analysis shows that the effect of thin insulators on the boundary data highly depends on the frequency, while the effect of small conductive objects does not depend on the operating frequency that much. This relation leads to the assertion that we can detect the small conductors when the frequency is  very high and  both thin insulators and small conductors when the frequency decreases.  Numerical simulations in the later part of this paper will illustrate this important observation.
We can similarly analyze $\mathfrak{C}_k^{\Im}(\omega,\delta_k)$ and $\mathfrak{D}_k^{\Im}(\omega,\delta_D)$. For the low frequency case, we have the following results.
\subsection{Low-frequency case: {\boldmath $|\lambda_c(\omega)|\approx 0$ and $\delta_k\approx 0$ with $|\lambda_c(\omega)|^{-1}\delta_k \approx \beta$ and $0<\beta<\infty$}}

In the low-frequency case,  the admittivity contrast $\lambda_c(\omega)$ is getting close to zero. As thickness $\delta_k$ goes to zero, we suppose that
\begin{equation} \label{assumpopt}
\f{1}{|\lambda_c(\omega)|}\delta_k\approx\beta.
\end{equation}
 Then by approximation \eref{Eq:NuJ}, the potential jump along each thin insulator can not be ignored. According to \cite{Ammari2006,Ammari2003,Frieman1989,Zribi2011}, we have the following asymptotic expansion formula of the potential $u^{\om}$ for low frequency current.
\begin{theorem}[Asymptotic expansion at low frequencies]
In the low-frequency regime, we have the following asymptotic formula for the boundary perturbations of the potential $u^\omega$:
\begin{eqnarray}\label{Eq:lowAsympt}
 \hspace{-2cm}&\indent \displaystyle \left(-\f{1}{2}I+K_\Omega\right)[u^\omega-u_0](x) =-\delta_D^2 \sum_{k=1}^{N_D}\nabla \Gamma(x,z_k)\cdot M(\lambda_d,B_k)\nabla u_0(z_k)\nonumber\\
\hspace{-2cm}&  \indent \displaystyle +\sum_{k=1}^{N_C} \int_{\mathcal{L}_k}\f{\p \Gamma(x,x')}{\p \nu_{x'}} [u^\omega]_k(x')dx' +
O (\delta^2_k) +  O (\delta_D^3).
\end{eqnarray}
\end{theorem}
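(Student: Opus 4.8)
The plan is to mirror the layer-potential derivation behind the high-frequency expansion of Theorem~\ref{Th:highasy}, modifying only the treatment of the insulating interfaces so as to retain the full potential jump $[u^\om]_k$ as the leading insulator density: under the scaling \eref{assumpopt} it is of order $\beta=O(1)$ and can no longer be linearized against the background gradient through the matrix $A_k$ as in the high-frequency case. Set $v:=u^\om-u_0$. Both $u^\om$, the solution of the effective zero-thickness model \eref{modelcell}, and $u_0$ are harmonic in $\Om\setminus(\overline D\cup\cup_k\mL_k)$, and $\f{\p}{\p\nu}v=0$ on $\p\Om$ since they carry the same Neumann data $g$. First I would apply Green's representation formula to $v$ and the fundamental solution $\Gamma(x,\cdot)$ on the slit domain $\Om\setminus(\overline D\cup\cup_k\mL_k)$, integrating over $\p\Om$, over the conductor boundaries $\p D_k$, and over both faces of each curve $\mL_k$.

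On each slit $\mL_k$ the flux continuity $\left[\f{\p u^\om}{\p\nu}\right]_{\mL_k}=0$ prescribed in \eref{modelcell} cancels the single-layer contribution, while the two faces combine the double-layer contribution into $\int_{\mL_k}\f{\p\Gamma(x,x')}{\p\nu_{x'}}[u^\om]_k(x')\,ds_{x'}$; since $[v]_k=[u^\om]_k$ (as $u_0$ is continuous across $\mL_k$), this is precisely the insulator term on the right-hand side of \eref{Eq:lowAsympt}, and it requires no expansion. The boundary integral over $\p\Om$ is a double layer of $v|_{\p\Om}$, whose trace, via the standard jump relation of $\mD_\Om$, produces the Neumann--Poincar\'e operator $\left(-\f12 I+\mK_\Om\right)[v]$ appearing on the left-hand side. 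It then remains to expand the contribution of the conductors $D_k$.

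For the conductors I would invoke the classical small-volume asymptotics: with $D_k=z_k+\delta_D B_k$ and the transmission density $\psi_{D_k}$ on $\p D_k$, a Taylor expansion of $\Gamma(x,\cdot)$ about $z_k$ together with the definition \eref{Eq:GPT} of the polarization tensor yields $-\delta_D^2\,\na\Gamma(x,z_k)\cdot M(\lambda_d,B_k)\na u_0(z_k)+O(\delta_D^3)$ for each $k$. The invertibility of $\lambda_d I-\mK^*_{B_k}$ used here persists for the complex contrast $\lambda_d(\om)$ of \eref{Eq:lamD}, so the real-coefficient arguments of \cite{Ammari2006,Ammari2003,Frieman1989} extend verbatim, and the separation hypothesis \eref{separation} guarantees that conductor--conductor and conductor--insulator interactions are of higher order and are absorbed into the $O(\delta_k^2)+O(\delta_D^3)$ remainder. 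Collecting the three pieces gives \eref{Eq:lowAsympt}.

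The main obstacle I expect is the uniformity of all remainder estimates in the singular regime $\lambda_c(\om)\to0$ with $\delta_k/|\lambda_c(\om)|\approx\beta$ fixed. Because the insulator now generates an $O(1)$ rather than an $O(\delta_k)$ perturbation of the interior field, replacing the true incident field at $z_k$ by $\na u_0(z_k)$ in the conductor term, and relegating the insulator--conductor coupling to the error, is no longer automatic; one must show that these interactions stay uniformly bounded as $\lambda_c\to0$, using the separation distance $d_0$ together with uniform mapping bounds for the boundary integral operators on $\mL_k$ and $\p D_k$. This is the same kind of difficulty flagged in Remark~\ref{Rk:JumpCondition}, which is why the statement is quoted from \cite{Ammari2006,Ammari2003,Frieman1989,Zribi2011} rather than proved in full here.
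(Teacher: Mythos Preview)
The paper does not actually give a proof of this theorem: it merely states the result and attributes it to \cite{Ammari2006,Ammari2003,Frieman1989,Zribi2011}. Your outline---Green's representation on the slit domain $\Om\setminus(\overline D\cup\bigcup_k\mL_k)$, using the transmission conditions in \eref{modelcell} to collapse the slit contribution to the double-layer integral $\int_{\mL_k}\partial_{\nu_{x'}}\Gamma(x,x')[u^\om]_k(x')\,ds_{x'}$, taking the trace on $\p\Om$ to produce $(-\tfrac12 I+\mK_\Om)$, and then invoking the standard polarization-tensor expansion for the conductors---is exactly the structure of the arguments in those references, so your approach is consistent with what the paper relies on. You also correctly flag the genuine analytic difficulty (uniformity of the remainder as $\lambda_c\to 0$ with $\delta_k/|\lambda_c|\approx\beta$), which the paper itself does not address and which is the reason the result is quoted rather than proved in the text.
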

In this case, since the potential jump $[u^\omega]_k(x)=\f{2\delta_k}{\lambda_c(\omega)} \f{\p u^\omega}{\p\nu}(x-\delta_k\nu_x )|_+$ along $\mathcal L_k$ is very large and could not be ignored, the effect of small conductors on the perturbations of the boundary voltage is hidden by the insulators. Although we cannot write (\ref{Eq:lowAsympt}) in an explicit way, we know that it is related with the endpoints as well as the potential jump along the thin insulators. When multiple thin insulators are well separated from each other, we can always image them from boundary measurements. However, small conductors at low frequencies are invisible since the thin insulators will dominate the boundary measurements. Note that
(\ref{assumpopt}) gives the optimal range of operating frequencies to use for optimal imaging of the thin insulators.

\subsection{Spectroscopic analysis}
Based on the above analysis in low- and high-frequency regimes, we mathematically derived the frequency dependency of the current-voltage data in a rigorous way. The current-voltage data is mainly affected by the outermost thin insulators when the frequency is low, whereas the data mainly depends on the small conductors when the frequency is high. With this observation, we can detect the outermost thin insulators at low frequencies. As the frequency increases, the small conductors become gradually visible whereas thin insulators fade out (thicker insulator fades out at higher frequency than thinner one). Hence, multi-frequency EIT system allows to probe these frequency dependent behavior. On the other hand, we could not give a clear range of the frequency to best identify thin objects or small conductors since the penetrating frequency is not only depending on thickness $\delta_k$ but also relying on admittivity contrast between $\gamma_c$ and $\gamma_b$, geometry, relative size between $\delta_k$ and domain $\Omega$, position distribution, etc. We plan to deal with this challenging issue in the future.
\section{Numerical simulations}
In this section, we will verify our mathematical analysis through various numerical simulations by using multi-frequency Electrical Impedance Tomography (mfEIT). The rough procedure of mfEIT using the standard sensitivity matrix \cite{Holder2005,Seo2012} is as follows:
\begin{enumerate}
\item $N_E$ electrodes $\mE_1,\mE_2,\cdots,\mE_{N_E}$ are  attached on the boundary of $\Omega$ with a unform distance between adjacent electrodes. Inject current between all adjacent pair of electrodes $\mE_k,\mE_{k+1}$ at various angular frequencies ($\omega_1,\om_2,\cdots, \om_{N_\om}$).
\item Solve the forward problem  using finite element method (FEM) to compute the potential $u^\om_k$ due to $k$-th injection current and collect simulated boundary voltage data $V_{\omega}=(V_\om^{1,1}, \cdots, V_\om^{N_E,N_E})$, where $V^{k,j}$ is the $j$th boundary voltage subject to the $k$th injection current, which is given by $
V^{j,k}_\om=\int_\Om \gamma^{\om}\na u_k^{\om}\cdot \na u_j^{\om} dx$.  (In real experiment, we do not use   $V^{k-1,k}_\om, V^{k,k}_\om, V^{k,k+1}_\om$ due to the unknown contact impedance.)
\item Discretizing the  domain $\Om$ into $M$ elements or pixels as $\Omega = \sum_{k=1}^M \mT_k$, compute the standard  sensitivity matrix (see \cite{Holder2005,Seo2012}). We use the standard linearized method to reconstruct the  admittivity images $ \delta\gamma_\om= \{\delta\gamma^{\omega}_{\mT_1},\delta\gamma^{\omega}_{\mT_2},\cdots,\delta\gamma^{\omega}_{\mT_M}\}^T$ from the boundary data $V_{\omega_j}$. Here, the reference homogeneous admittivity was subtracted.
\end{enumerate}

The numerical simulations are performed on a unit disk $\Omega=\{(x,y):x^2+y^2<1\}$ with 16 electrodes equally spaced around its circumference. Inside the unit disk, we consider the following numerical models with two conductive objects and several thin insulating objects inside as shown in Figure \ref{Fig:SimModel}.
 \begin{figure}[ht!]
  \centering
  \begin{tikzpicture}
  \node at (0,0) {\includegraphics[scale=0.7]{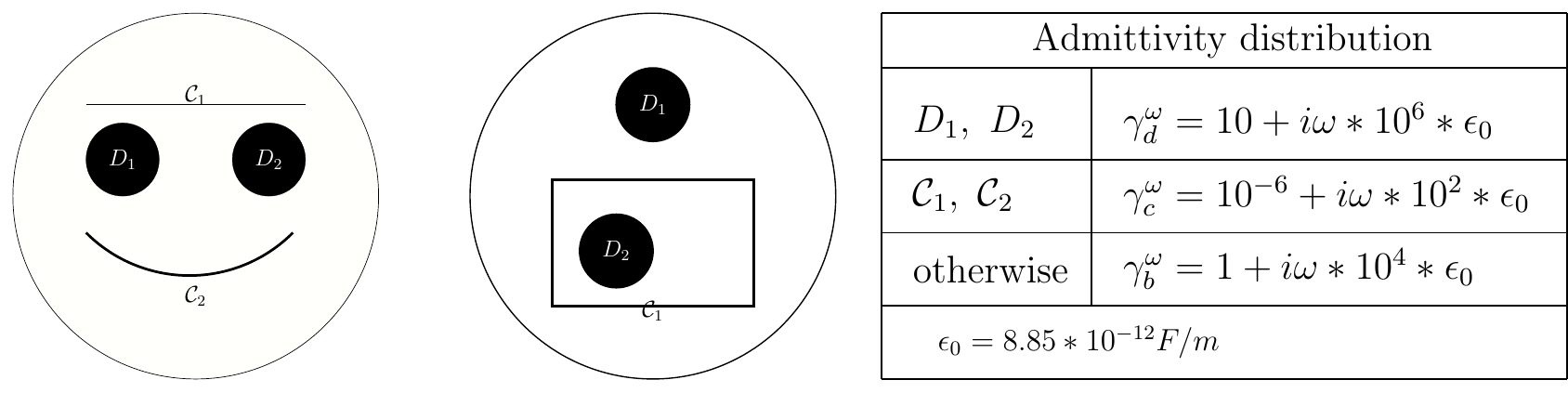}};
  \node at (-4.5,-2) {(a)};
   \node at (-0.8,-2) {(b)};
   \node at (3,-2) {(c)};
  \end{tikzpicture}
  \caption{Two different numerical models: (a) Two thin insulating inhomogeneities $\mC_1,~\mC_2$ with same thickness and two small conductive objects and (b) two small conductive objects $D_1$ and $D_2$, and a rectangular insulating inhomogeneity encircling $D_2$; (c) Admittivity distribution for thin insulating inhomogeneities $\mC_1,~\mC_2$ and small conductive objects $D_1,~D_2$.}\label{Fig:SimModel}
\end{figure}
 In Figure \ref{Fig:SimModel} (a), two thin insulating inhomogeneities with the same thickness and two conductive objects are placed inside a homogenous domain so that we can investigate the effect of current frequencies on the reconstructed images. Thereafter, in Figure \ref{Fig:SimModel} (b), we will take use of a simulation model with two conductive objects encircled by a rectangular insulating object $\mC_1$. The thickness of the three thin insulating objects is $5*10^{-4}$ in Figure \ref{Fig:SimModel} (a, b). The admittivity distribution is a piecewise constant in each subdomain as shown in Figure \ref{Fig:SimModel} (c).   A wide range of current frequencies is applied and the resulting boundary potential  are collected. The selected spectroscopic images of admittivity distribution are shown in Figure \ref{Fig:RecomSimModel} where the admittivity distribution is reconstructed at frequencies $\omega=$ 10Hz, 1kHz, 50kHz, 150kHz, 250kHz and 500kHz.

\subsection{Multi-frequency images}
 Figure \ref{Fig:RecomSimModel} shows the reconstructed spectroscopic images when the simulation model is Figure \ref{Fig:SimModel} (a); When current is injected at low frequencies ($\f{\omega}{2\pi}=$ 10Hz, 1kHz), two small conductive objects are invisible since the low frequency currents are blocked by the thin insulating inhomogeneities and the boundary potential is mostly influenced by the thin insulating objects.
 \begin{figure}[ht!]
  \centering
  \includegraphics[scale=0.7]{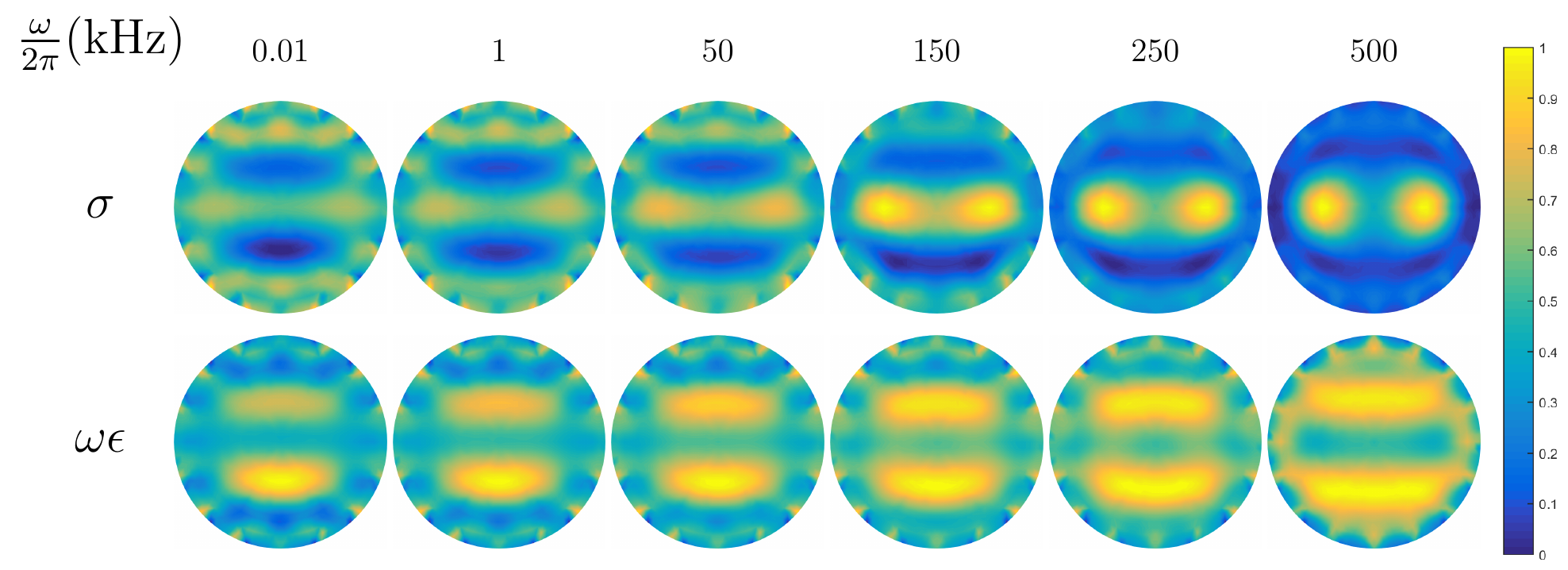}\\
  \caption{Reconstructed admittivity distribution when current is injected at frequencies $\omega=$ 10Hz, 1kHz, 50kHz, 150kHz, 250kHz, 500kHz. The
second row contains reconstructed images for normalized conductivity $\sigma$ (S/m), and the third row is reconstructed images for normalized $\omega\epsilon$ (S/m). }\label{Fig:RecomSimModel}
\end{figure}
As frequency increases, the currents at frequencies 50kHz and 150kHz begin to penetrate the thin insulating objects and conductive objects start appearing in the reconstructed images. Finally, when the currents are applied at frequencies 250kHz and 500kHz, both conductive objects are visible in the reconstructed images because the potential is mainly affected by the conductive objects $D_1,~D_2$.
 \begin{figure}[ht!]
  \centering
  \includegraphics[scale=1]{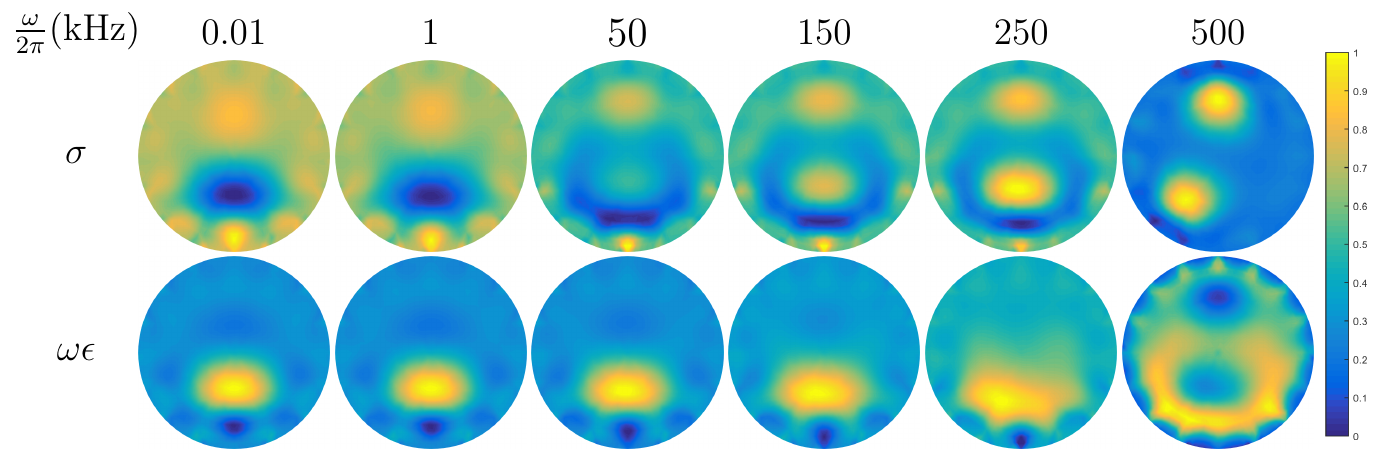}\\
  \caption{Reconstructed admittivity distribution when current is injected at frequencies $\omega=$ 10Hz, 1kHz, 50kHz, 150kHz, 250kHz, 500kHz. The
second row contains reconstructed images for normalized conductivity $\sigma$ (S/m), and the third row is reconstructed images for normalized $\omega\epsilon$ (S/m). }\label{Fig:RecomSimModel2}
\end{figure}

As to the second numerical model in Figure \ref{Fig:SimModel} (b), we illustrate the reconstructed spectroscopic images in Figure \ref{Fig:RecomSimModel2}. Similarly, conductive object surrounded by the rectangular insulating object is not visible at low frequencies 10kHz and 1kHz because the currents can not pass through the thin insulating objects. On the other hand, when frequency increases from 50kHz to 250kHz, the insulating rectangular 'wall' starts to fade out because the current begins to penetrate insulating object and the boundary voltage is influenced by the internal conductive object. When the frequency is very high (500kHz), the insulating rectangular object $\mC_1$ disappears and only conductive objects are visible in the reconstructed image.  It is due to the fact that the data are mainly influenced by conductive objects. From the point of view of inverse problem, the spectroscopic images can give the spectroscopic information for the thin insulating objects $\mC_1,\mC_2$ by providing how much the boundary potential is affected by thin insulating inhomogeneities at various frequencies. The influence of thin insulating objects on the boundary measurements decreases as the frequency increase. The dominant influence at low frequencies is from insulating objects while the boundary potential is dominated by conductive objects at high frequencies.
 \subsection{Fusion of multi-frequency images}
 Now we are considering to construct an integrated image based on an investigation of principal component analysis (PCA) where the information of both thin insulating objects and small conductive objects can be extracted from the integrated image. PCA is a useful tool in extracting the dominant features (principal components) from a set of reconstructed images at various frequencies \cite{Sophian2003,Turk1991}. In order to implement this approach, we first obtain the reconstructed images at a broad range of frequencies $\omega=\omega_1,\omega_2,\cdots,\omega_{N_\omega}$ and each reconstructed admittivity. Then we represent each image by an $M\times 1$ column vector ${\bm\delta}{\bm\gamma_{\omega_j}}, j=1,2,\cdots, N_\omega$ where ${\bm\delta}{\bm\gamma_{\omega_j}} = \{\delta\gamma^{\omega_j}_{\mT_1},\delta\gamma^{\omega_j}_{\mT_2},\cdots,
 \delta\gamma^{\omega_j}_{\mT_M}\}^T$. Then average admittivity  at $\om$  can be defined as
\begin{equation}\label{Def:avgadmt}
 \overline{{\bm\delta}{\bm\gamma_{\omega}}} = \f{1}{N_\omega} \sum_{j=1}^{N_\omega} {\bm\delta}{\bm\gamma_{\omega_j}}.
\end{equation}
We define the mean oscillation of ${\bm\delta}{\bm\gamma_{\omega}}$ by  the vector
 \begin{equation}\label{Def:diffavgadmt}
\hat{{\bm\delta}{\bm\gamma_j}} = {\bm\delta}{\bm\gamma_{\omega_j}} - \overline{{\bm\delta}{\bm\gamma_{\omega}}}.
  \end{equation}
Since the real part of admittivity is $\sigma$ and its imaginary part is $\omega\epsilon$, we resolve the set of admittivities and deal with the real and imaginary part separately.  

Let $ \hat{{\bm\delta}{\bm\sigma_j}} $ be the real part of $\hat{{\bm\delta}{\bm\gamma_j}}$. This set of vectors is then subject to principal component analysis which tries to find a set of $N_\omega$ orthogonal vectors and their associated eigenvalues. Both information can best describe the distribution of the admittivity. The vectors and scalars are the eigenvectors and eigenvalues of the covariance matrix
\begin{equation}\label{Eq:eigencovariance}
 \mathbf{C_\sigma} = \f{1}{N_\omega}\sum_{j=1}^{N_\omega}\hat{{\bm\delta}{\bm\sigma_j}} ~\hat{{\bm\delta}{\bm\sigma_j}} ^T= \bm{\hat{\delta\sigma}~\hat{\delta\sigma}^T},
\end{equation}
where $\bm{\hat{\delta\sigma}}=[\hat{{\bm\delta}{\bm\sigma_1}}~ \hat{{\bm\delta}{\bm\sigma_2}} ~ \hat{{\bm\delta}{\bm\sigma_3}}\cdots \hat{{\bm\delta}{\bm\sigma_{N_\omega}}}]$ is of size $M\times N_\omega$ and the covariance matrix $\mathbf{C_\sigma}$ is of size $M\times M$ with $M$ the pixel number of the admittivity distribution. Using the singular value decomposition, the covariance matrix $\mathbf{C_\sigma}$ can be written as
\begin{equation}\label{Eq:svdC}
 \mathbf{C_\sigma} = \bm{\hat{\delta\sigma}~\hat{\delta\sigma}^T} = {\bm U}{\bm \Lambda}{\bm U^T} = \sum_{i=1}^{M}\lambda_i \mathbf{u}_i\mathbf{u}_i^T,
\end{equation} where $\lambda_i$ is the eigenvalue of $\bm{\hat{\delta\sigma}\hat{\delta\sigma}^T}$ with $\lambda_1\geq \lambda_2,\cdots\geq \lambda_M$ and $\mathbf{u}_i$ is the corresponding eigenvector.
Note that in practice, the number of frequencies ($N_\omega$) is much smaller than the pixel number ($M$) of the reconstructed images. Therefore, we consider $N (< N_\om) $ leading eigenvectors of the covariance matrix that are chosen as those with the largest associated eigenvalues,  where $N $ depends on  the signal-to-noise ratio (SNR). Then the principal components for any admittivity image $\bm{\delta}\bm{\sigma}_{\omega_j},j=1,\cdots,N_\omega$, can be written as
    \begin{equation}\label{Eq:pcasigma}
  p_i = {\bf u}_i^T (\bm{\delta}\bm{\sigma}_{\omega_j}- \overline{{\bm\delta}{\bm\sigma_{\omega}}}).
\end{equation}
for $i=1,2,\cdots, N$. Here $p_i$ represents the data projected into the $N-$dimensional space of eigenvectors. In order to enhance the image details, we rewrite $\bm{\hat{\delta\sigma}}$ in terms of singular value decomposition
\begin{equation}
  \bm{\hat{\delta\sigma}} = \sum_{i=1}^{N_\omega} \sqrt{\lambda_i} {\bm u_i}{\bm v_i}^T,
\end{equation} where ${\bm v_i}$ is the eigenvectors of $\mathbf{C_\sigma}^T$ and $ {\bm u_i}=\bm{\hat{\delta\sigma}}{\bm v_i}$. We construct the following matrix from the principal components of $\bm{\delta}\bm{\sigma}_{\omega_j}$
\begin{equation}
\tilde{\bm{\delta}\bm{\sigma_{\omega}}} = \sum_{i=1}^N {\bm u_i}{\bm v_i}^T,
\end{equation}
where $\tilde{\bm{\delta}\bm{\sigma_{\omega}}}$ is a matrix of size $M\times N_\omega$. The new admittivity distribution can be obtained by
  \begin{equation}\label{Eq:avgimg}
  {\bm{\delta}\bm{\sigma}_{\omega}} = \f{1}{N_\omega}\sum_{j=1}^{N_\omega} \tilde{\bm{\delta}\bm{\sigma}_{\omega,j}},§
\end{equation} where $\tilde{\bm{\delta}\bm{\sigma}_{\omega,j}}$ is the $j-$th column of $\tilde{\bm{\delta}\bm{\sigma_{\omega}}}$. Similarly, we can obtain the imaginary part of integrated admittivity distribution. In our case, only $N=2$ eigenvectors corresponding to the two largest eigenvalues are chosen to obtain the new images. The images corresponding the configurations in Figure \ref{Fig:SimModel} (a,b) are shown respectively in Figures \ref{Fig:int1} and \ref{Fig:int2}. The integrated images show both on the thin insulating and the small conductive objects. 
\begin{figure}[ht!]
\centering
\begin{tikzpicture}
  \node at (0,0) {\includegraphics[scale=0.8]{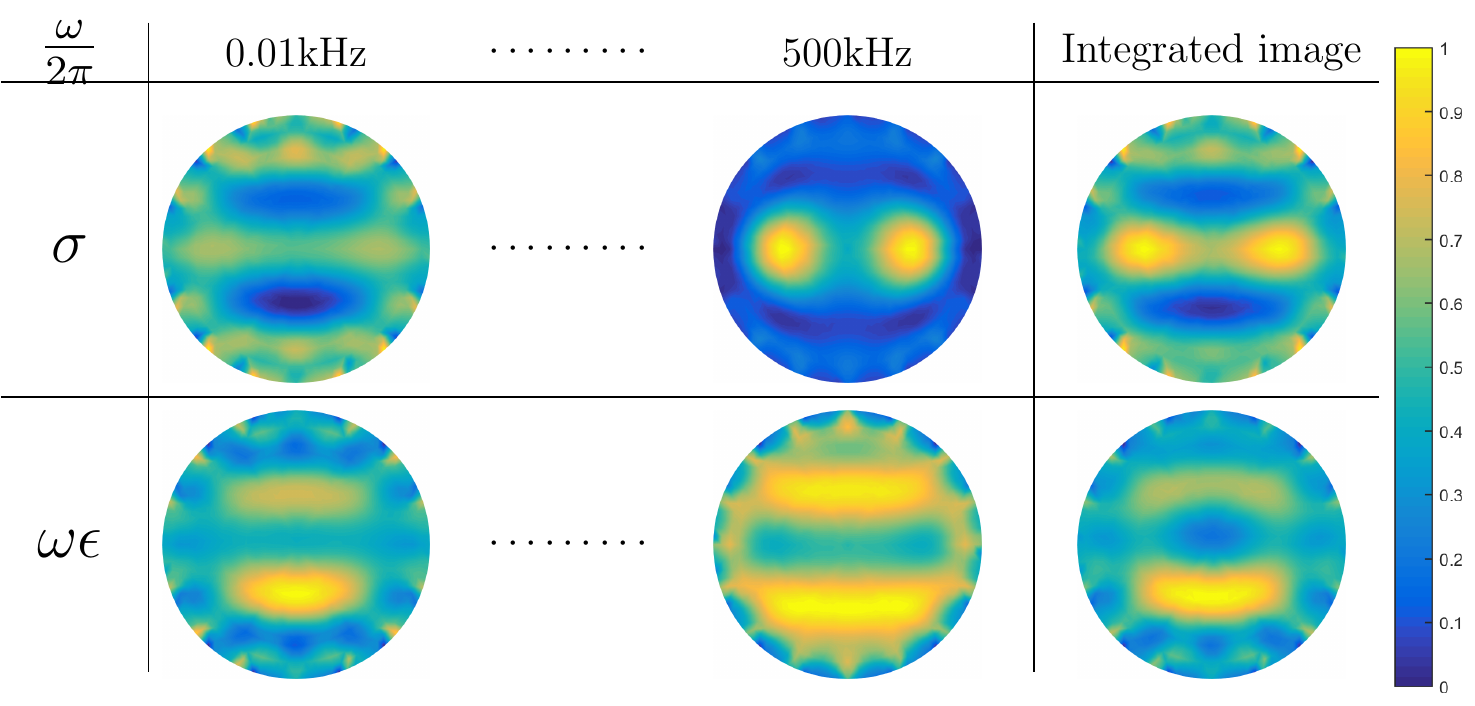}};
\end{tikzpicture}
\caption{Integrated image of real and imaginary part of normalized admittivity distribution for numerical model in Figure \ref{Fig:SimModel} (a).}\label{Fig:int1}
\end{figure}
\begin{figure}[ht!]
\centering
\begin{tikzpicture}
  \node at (0,0) {\includegraphics[scale=0.8]{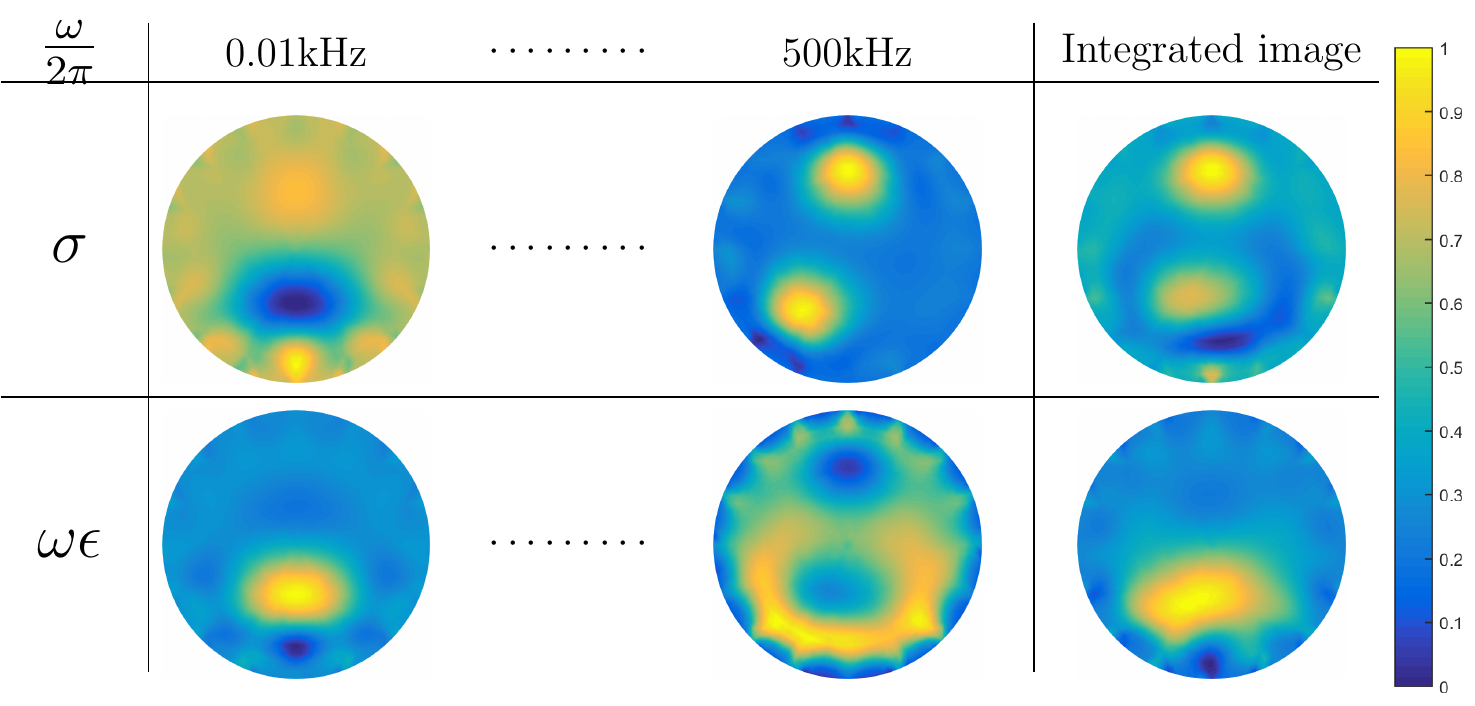}};
\end{tikzpicture}
\caption{Integrated image of real and imaginary part of normalized admittivity distribution for numerical model in Figure \ref{Fig:SimModel} (b).}\label{Fig:int2}
\end{figure}

\section{Phantom experiments}
In this section, we present  phantom experiments by using  32-channel mfEIT system (EIT-Pioneer Set made by Swisstom, Switzerland) to illustrate the frequency dependent behavior of the reconstructed images. The available injection current frequency of the Swisstom EIT-Pioneer Set is set between 50 kHz and 250 kHz.
\begin{figure}[ht!]
\centering
\begin{tikzpicture}
  \node at (0,0) {\includegraphics[scale=0.7] {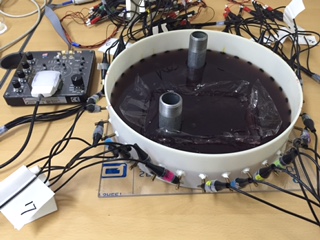}};
\end{tikzpicture}
\caption{Configuration of phantom experiments by using Swisstom EIT-Pioneer Set.}\label{Fig:ExpConfigu}
\end{figure}

Figure \ref{Fig:ExpConfigu} shows the configuration of phantom experiment. We use a cylindrical tank with 360 mm in diameter  and 32 equally-spaced electrodes are attached. The tank is filled with agar-gelatin mixture. Inside the phantom, there are two conductive objects with a diameter of 30 mm and four very thin kitchen wrap with 1 $\mu$m thickness. One conductive object is encircled by four very thin kitchen wrap. We injected current of 1 mA at  various frequencies of 50 kHz, 100 kHz, 150 kHz, 180kHz, 200 kHz and 250 kHz. Figure \ref{Fig:exp} (a) presents the reconstructed images at six different frequencies. The insulating wraps appear as a solid object from 50 kHz to 100 kHz since currents cannot penetrate the insulating wrap. As we expected by numerical experiments in the previous section, the insulating wrap starts to fade out at frequency 150 kHz and totally disappear at 250kHz. All the experimental results shown in Figures \ref{Fig:exp} (a) are consistent with the numerical simulations in the previous section.
\begin{figure}[ht!]
\centering
\begin{tikzpicture}
  \node at (0,0) {\includegraphics[scale=0.7]{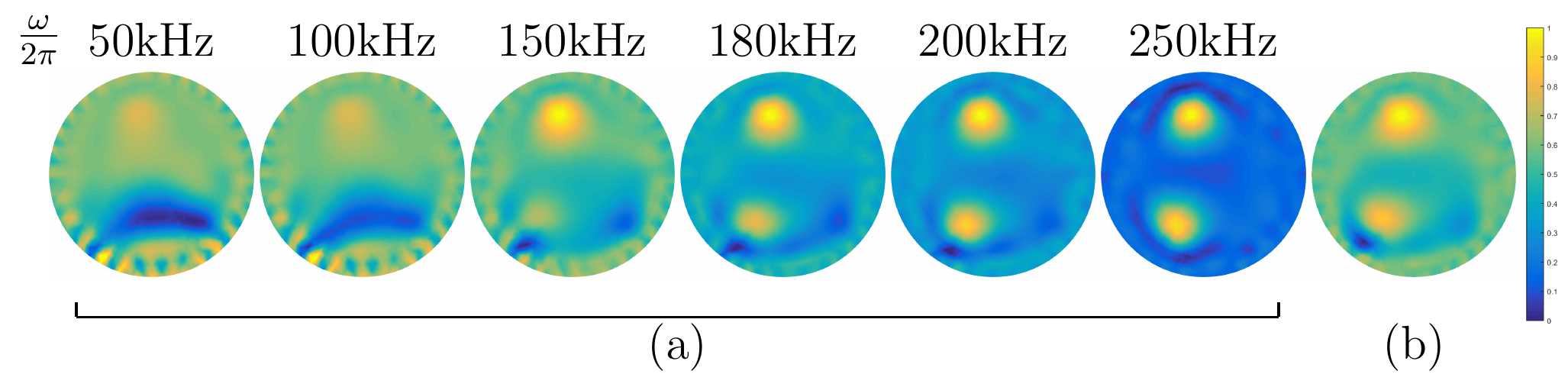}};
\end{tikzpicture}
\caption{(a) Spectroscopic images of normalized admittivity distribution from low frequencies to high frequencies; (b) Fusion of multi-frequency images.}\label{Fig:exp}
\end{figure}
As for the fusion of multi-frequency images, the experiments are conducted at a wide range of frequencies from 50kHz to 250kHz with a step size 10kHz since the minimum frequency step of Swisstom EIT-Pioneer Set is fixed at 10kHz. The integrated image is shown in Figure \ref{Fig:exp} (b) where both two conductive objects and the surrounding insulating wrap are visible.
\section{Conclusion}
In this work, we have provided a rigorous mathematical formula for the jump of potential and normal derivative across the thin linear-shaped insulating inhomogeneities based on layer potential techniques. The potential jump is related with the thickness of insulating objects as well as the current frequencies. Based on these jump conditions, we have developed two asymptotic expansions for current-voltage data perturbations due to thin insulators and small conductors at various frequencies. Using these two asymptotic
expansions, we have mathematically shown that at high frequencies
we can visualize the small conductors, while at low frequencies we
can only get the information of thin insulators. Based on this
mathematical analysis, we conclude that multiple frequencies help
us to handle the spectroscopy behavior of the current-voltage data
with respect to thin insulators and small conductors. When the frequency
increase from very low to very high, we can continuously observe
the images of thin insulators (low frequency), both thin insulators and small conductors (not too low, not too high frequency), and only small insulators (high frequency). The mathematical results are supported by a variety of numerical illustrations and phantom experiments.

\section*{Acknowledgements}
Ammari was supported  by the ERC Advanced Grant Project MULTIMOD--267184. Seo and Zhang were supported by the National Research Foundation of Korea (NRF) grant funded by the Korean government (MEST) (No. 2011-0028868, 2012R1A2A1A03670512)

\bibliographystyle{iopart}
\section*{References}

\end{document}